\theoremstyle{plain}
\newtheorem{thm}{Theorem}[section]
\newtheorem{lem}{Lemma}[section]
\newtheorem{cor}{Corollary}[section]
\theoremstyle{definition}
\theoremstyle{remark}
\numberwithin{equation}{section}
\numberwithin{equation}{section}
\DeclareMathOperator{\arcosh}{\mathrm{arcosh}}
\DeclareMathOperator{\sech}{\mathrm{sech}}
\DeclareMathOperator{\spp}{\mathrm{supp}}
\newcommand{\be}{\begin{equation}}
\newcommand{\ee}{\end{equation}}
\newcommand\underrel[3][]{\mathrel{\mathop{#3}\limits_{%
			\ifx c#1\relax\mathclap{#2}\else#2\fi}}}
\providecommand{\abs}[1]{\left\lvert#1\right\rvert}
\providecommand{\norm}[1]{\left\lVert#1\right\rVert}
\title[PCF soliton stability]{Nonlinear Stability of nonsingular solitons of the Principal Chiral Field equation}
\author[M. Alejo]{Miguel \'A. Alejo}
\address{Miguel \'A. Alejo. Departamento de Matem\'aticas, Universidad de C\'ordoba, C\'ordoba, España.}
\email{malejo@uco.es}
\thanks{M.\'A. was supported by Grant PID2022-137228OB-I00 funded by the Spanish Ministerio de Ciencia, Innovaci\'on y Universidades, MICIU/AEI/10.13039/501100011033.}
\author[C.  Mu\~noz]{Claudio Mu\~noz}
\address{Claudio Mu\~noz. Departamento de Ingenier\'ia Matem\'atica and Centro de Modelamiento Matem\'atico (UMI 2807 CNRS), Universidad de Chile, Casilla 170 Correo 3, Santiago, Chile.}
\email{cmunoz@dim.uchile.cl}
\thanks{C.M. was partially funded by Chilean research grants FONDECYT 1191412, 1231250, and Centro de Modelamiento Matem\'atico (CMM), ACE210010 and FB210005, BASAL funds for centers of excellence from ANID-Chile.}
\author[J. Trespalacios]{Jessica Trespalacios}
\address{Jessica Trespalacios. Departamento de Ingenier\'ia Matem\'atica and Centro de Modelamiento Matem\'atico (UMI 2807 CNRS), Universidad de Chile,  Santiago, Chile.}
\email{jtrespalacios@dim.uchile.cl}
\thanks{J.T. was partially funded by the National Agency for Research and Development (ANID)/ DOCTORADO NACIONAL/2019 -21190604, Chilean research grants FONDECYT 1191412, Centro de Modelamiento Matem\'atico (CMM), ACE210010 and FB210005, BASAL funds for centers of excellence from ANID-Chile.}
\subjclass{Primary: 35B35, 35B40, 37K40, 35Q75.}
\keywords{Nonlinear Stability, Principal Chiral Field, Solitons solutions, Virial Identities}	
\date{\today}
\begin{document}
	\begin{abstract}
	We consider the Principal Chiral Field model posed in 1+1 dimensions into the Lie group $\text{SL}(2,\mathbb R)$. In this work, we show the nonlinear stability of small enough nonsingular solitons. The method of proof involves the use of vector field methods, as in a previous work by the second and third authors dealing with Einstein's field equations under the Belinski-Zakharov formalism, extending for all times the size of suitable null weighted norms of the perturbations at time zero. 
	\end{abstract}
	
	\maketitle
	
\section{Introduction}
We are concerned with the Principal Chiral Field model  (PCF) given by
\begin{equation}\label{S:eqPCF}
\partial_t\left(\partial_t g g^{-1}\right) -\partial_x\left(  \partial_x g g^{-1} \right)=0, \quad (t,x) \in \mathbb R \times \mathbb R, 
\end{equation}
valid for a $2\times 2$ Riemannian metric $g$, into the Lie group $\hbox{SL}(2;\mathbb{R})$. Chiral fields on Lie groups represent equivalence classes of the integrable relativistic two-dimensional systems; consequently, \eqref{S:eqPCF} represents an integrable system. Zakharov and Mikhailov \cite{Zakharov1978} showed that classical spinor fields are connected with each such system. Equation \eqref{S:eqPCF} is a nonlinear $\sigma$-model. The first description of the integrability of this model in the language of the commutative representation (\refeq{S:eqPCF}) was given in \cite{Zakharov1979}. Subsequently, different results associated with integrability, conserved quantities and soliton solutions were obtained \cite{Novikov, faddeev1986integrability, beggs1990solitons}, as well as different descriptions of this equation using B\"acklund and Darboux transformations \cite{haider2008, devchand1998hidden}. There are several results associated with the study of the reduction of the PCF equation \eqref{S:eqPCF} in homogeneous spaces of Lie groups. In particular, Zakharov and Mikhailov \cite{Zakharov1978}  studied PCF \eqref{S:eqPCF} for the special unitary group $\hbox{SU}(N)$. In \cite{zakharov1980int}, they studied the connection of this equation with the Nambu-Jona-Lasinian model.  In this work, we follow a different approach; we shall study the PCF in the particular case of the reduction problem on ``symmetric spaces'', following the works \cite{belinskii1978integration, belinski1979stationary, yaronhadad_2013}. The symmetric space considered is the invariant manifold of symmetric matrices lying in the Lie group $\hbox{SL}(2;\mathbb{R})$. This space is not a Lie group, but it can be identified with a hyperboloid in Minkowski spacetime, see \cite{Matzner1967}.

In this work, we shall study the stability of regular soliton solutions of the PCF model. In particular, we are interested in the notion of \emph{orbital and asymptotic stability} of special solutions of PCF model with small initial data perturbations. As far as we know, it seems the first rigorous results in this direction for this kind of solution.

Unlike many previous results related to orbital stability, in this paper we do not follow the classical approach. PCF is a model where standard techniques fail and one needs a new approach. Specifically, we will combine asymptotic stability techniques and conservation of local energy to provide a complete characterization of perturbations of regular soliton solutions of \eqref{S:eqPCF}.

	\subsection{Setting of the model}   Following the same approach as in \cite{JT},  we consider the identification of the PCF equation \eqref{S:eqPCF}  with the following $(1+1)$-dimensional system of the semilinear wave equations
	\begin{equation}\label{S:PCF}
	 	\begin{cases*}
	 		\partial^2_{t}\Lambda- \partial_x^2\Lambda=-2\sinh(2\Lambda)((\partial_x \phi)^2-(\partial_t\phi)^2),\\
	 		\partial_t^2 \phi-\partial_x^2 \phi=-\dfrac{\sinh(2\Lambda)}{\sinh^2(\Lambda)}(\partial_t \phi \partial_t \Lambda -\partial_x \phi \partial_x \Lambda).
	 	\end{cases*}
	 \end{equation}	
This system corresponds to the PCF equation using the so-called Gowdy coordinates (see \cite{MT2023} for full details). 
Here $\Lambda$ is the scalar field that determines the eigenvalues of $g$, and the scalar field $\phi$ describes the deviation of $g$ from being a diagonal matrix. Since $\phi$ is an angle, we take $\phi\in [0,2\pi]$ w.l.g. Therefore $\Lambda, \phi$ can be considered as the two degrees of freedom in the symmetric matrix $g$ \cite{JT,yaronhadad_2013}.

The system \eqref{S:PCF} is a set of coupled quasilinear wave equations, with a rich analytical and algebraic structure. Solutions of  \eqref{S:PCF} are invariant under space and time translations. Indeed, for any $t_0, x_0 \in \mathbb{R}, $ $\Lambda(t-t_0,x-x_0), \phi(t-t_0,x-x_0)$ is also a solution.  Low-order conservation laws for the PCF equation \eqref{S:PCF} are the energy
\begin{equation}\label{S:energy}
		E[\Lambda, \Lambda_t,\phi, \phi_t](t) = \int
		\left( \dfrac{1}{2}((\partial_{x}\Lambda)^{2}+(\partial_{t}\Lambda)^{2})+2\sinh^{2}(\Lambda)((\partial_{x}\phi)^{2}+(\partial_{t}\phi)^{2}) \right)(t,x)dx,
	\end{equation}
and the momentum
\begin{equation}\label{S:momentum}
 P[\Lambda, \Lambda_t,\phi, \phi_t](t) =\int \left( \partial_x\Lambda\partial_t\Lambda + 4\sinh^{2}(\Lambda)\partial_x\phi\partial_t\phi \right)(t,x)dx.
\end{equation} 
Note that the energy is well-defined if $(\Lambda,\partial_t\Lambda)\in \dot H^1 \times L^2$, but a suitable space for $(\phi,\partial_t\phi)$ strongly depends on the weight $\sinh^{2}{\Lambda}$, which can grow exponentially in space, since $ \dot H^1$ can easily contain unbounded functions. In this regard, making sense of the energy (even for classical solutions such as solitons) is subtle and requires a deep and careful analysis.

\subsubsection{Local and global solutions} 
Among the results presented in \cite[Proposition 1.1]{JT}, one has the local existence result for solutions in the energy space. In the analysis of the initial value problem for this system, the regularity of the term $\frac{\sinh(2\Lambda)}{\sinh^2(\Lambda)}$ is delicate, especially when the function $\Lambda(t,x)$ is zero. This case must be carefully analyzed in order to be able to construct a proper local existence result associated to PCF \eqref{S:PCF}. In \cite{JT}, the function  $\Lambda(t,x)$ was set as a perturbation of a constant, nonzero background in the form
	 \begin{equation*}
	 \Lambda(t,x):= \tilde \lambda + \tilde{\Lambda}(t,x), \quad \tilde \lambda \neq 0.
	 \end{equation*}
	 Notice that this choice makes sense with the energy in \eqref{S:energy}, in the sense that $\Lambda\in \dot H^1$ and $\partial_t\Lambda \in L^2$. The basic idea for LWP was to establish some conditions that are required on $\tilde \lambda$ and $\tilde{\Lambda}$ in order to obtain the desired regularity results. With this choice, the system \eqref{S:PCF} can be written in terms of the function $\tilde{\Lambda}(t,x)$ as follows: 
	 \begin{equation}\label{ecuacion_final}
	 	\begin{cases*}
	 		\partial^2_{t}\tilde{\Lambda}- \partial_x^2 \tilde{\Lambda}=-2\sinh(2\tilde \lambda +2\tilde{\Lambda})((\partial_x \phi)^2-(\partial_t\phi)^2),\\
	 		\partial_t^2 \phi-\partial_x^2 \phi =-\dfrac{\sinh(2 \tilde \lambda +2\tilde{\Lambda})}{\sinh^2({\color{black} \tilde \lambda} +\tilde{\Lambda})}(\partial_t \phi \partial_t \tilde{\Lambda} -\partial_x \phi \partial_x \tilde{\Lambda}).
	 	\end{cases*}
	 \end{equation}
Having established the existence of solutions, the second result presented in \cite{JT}  involves whether or not local solutions can be extended globally in time. Actually, one has \cite[ Theorem 1.1]{JT}:
	\begin{thm}\label{S:GLOBAL0}
	Consider the semilinear wave system \eqref{S:PCF} posed in $\mathbb{R}^{1+1}$, with the following initial conditions:
	\begin{equation*}
		\begin{cases}
			(\phi,\tilde{\Lambda})|_{\{t=0\}}= \varepsilon(\phi_0,\tilde{\Lambda}_0), \quad (\phi_0,\tilde{\Lambda}_0)\in C_c^{\infty}(\mathbb{R})^2, \\
			(\partial_t\phi,\partial_t\tilde{\Lambda})|_{\{t=0\}}= \varepsilon(\phi_1,\tilde{\Lambda}_1),\quad   (\phi_1,\tilde{\Lambda}_1) \in C_c^{\infty}(\mathbb{R})^2.  
		\end{cases}
	\end{equation*} 
Then, there exists $\varepsilon_0$ such that if $\varepsilon < \varepsilon_0$, the unique solution remains smooth for all time and has finite conserved energy \eqref{S:energy}.
	\end{thm}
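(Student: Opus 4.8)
The plan is to pass to null coordinates and exploit a hidden null structure of the system, then propagate the smallness of suitable weighted norms by a bootstrap (continuity) argument; this is precisely where the vector field / null-weighted norm method enters. First I would introduce $u = t-x$, $v = t+x$, so that $\partial_t^2-\partial_x^2 = 4\partial_u\partial_v$. A direct computation gives $(\partial_x\phi)^2-(\partial_t\phi)^2 = -4\,\partial_u\phi\,\partial_v\phi$ and $\partial_t\phi\,\partial_t\tilde\Lambda - \partial_x\phi\,\partial_x\tilde\Lambda = 2(\partial_u\phi\,\partial_v\tilde\Lambda + \partial_v\phi\,\partial_u\tilde\Lambda)$, so that \eqref{ecuacion_final} acquires the transport form
\[
\partial_u\partial_v\tilde\Lambda = 2\sinh(2\tilde\lambda + 2\tilde\Lambda)\,\partial_u\phi\,\partial_v\phi, \qquad
\partial_u\partial_v\phi = -\tfrac12\,\frac{\sinh(2\tilde\lambda + 2\tilde\Lambda)}{\sinh^2(\tilde\lambda + \tilde\Lambda)}\,(\partial_u\phi\,\partial_v\tilde\Lambda + \partial_v\phi\,\partial_u\tilde\Lambda).
\]
The decisive structural fact is that no diagonal quadratic terms such as $(\partial_u\phi)^2$ or $(\partial_v\phi)^2$ appear: the right-hand sides are genuine null forms. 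This is what makes global existence possible in a single space dimension, where the free wave evolution enjoys no dispersive decay and generic quadratic nonlinearities would be expected to fail.

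Next I would invoke the local well-posedness and the continuation criterion of \cite[Proposition 1.1]{JT}, which reduce the theorem to an a priori bound: the smooth solution extends globally as soon as a suitable norm of the first derivatives stays finite on $[0,T)$. Since the principal part is the free wave operator and the nonlinearity is a smooth local expression vanishing at the zero state, finite speed of propagation confines the solution issuing from data supported in $[-R,R]$ to the light cone, so at each fixed time it is compactly supported; moreover, as long as $\|\tilde\Lambda\|_{L^\infty}$ is small the coefficient $\sinh^2(\tilde\lambda+\tilde\Lambda)$ stays bounded away from $0$ and $\infty$ (using $\tilde\lambda\neq 0$), which removes the singularity of the $\phi$-equation.

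The heart of the matter is a bootstrap estimate for the null-weighted quantity
\[
\mathcal{X}(T) = \sum_{w\in\{\phi,\tilde\Lambda\}}\Big( \sup_{u}\int \langle v\rangle^{a}\,|\partial_v w|\,dv + \sup_{v}\int \langle u\rangle^{a}\,|\partial_u w|\,du + \|\partial_u w\|_{L^\infty} + \|\partial_v w\|_{L^\infty} \Big),
\]
with all norms taken over the truncated slab $\{0\le t\le T\}$, $a>0$ fixed and $\langle\cdot\rangle = (1+|\cdot|^2)^{1/2}$. Assuming the bootstrap hypothesis $\mathcal{X}(T)\le C\varepsilon$, I would integrate each transport equation along its characteristic, writing $\partial_u\phi(u,v) = \partial_u\phi(u,v_0) + \int_{v_0}^v F_1\,dv'$ and symmetrically in $u$. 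The null structure lets me pair, in every product, the factor that is being integrated against its own good variable: in $\int |\partial_u\phi|\,|\partial_v\tilde\Lambda|\,dv'$ one controls $|\partial_u\phi|$ by the $L^\infty$ piece of $\mathcal{X}$ and the remaining $\int |\partial_v\tilde\Lambda|\,dv'$ by the weighted $L^\infty_uL^1_v$ piece, and similarly for the cross term. The weight $\langle\cdot\rangle^a$ is exactly what renders these characteristic integrals uniformly convergent, reflecting that the two colliding profiles overlap only on a region the null weight makes integrable. This yields a quadratic closing inequality of the form $\mathcal{X}(T)\le C_0\varepsilon + C_1\,\mathcal{X}(T)^2$.

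A standard continuity argument then gives $\mathcal{X}(T)\le 2C_0\varepsilon$ uniformly in $T$ provided $\varepsilon<\varepsilon_0$, so the first-derivative norms never blow up; differentiating the transport equations and repeating the estimate propagates bounds on all higher derivatives, giving smoothness for all time and hence global existence by the continuation criterion. Finally, since $\tilde\Lambda$ stays uniformly small, $\sinh^2\Lambda$ is bounded, the energy density in \eqref{S:energy} is integrable on the (at each time compact) support, and conservation follows from the multiplier identity for the system. I expect the main obstacle to be the simultaneous choice of the weight exponent $a$ and the precise blend of $L^\infty$ and characteristic $L^1$ norms in $\mathcal{X}$: the problem is scale-critical in one dimension, so the weights must be strong enough to make every characteristic integral converge uniformly in time, yet weak enough to remain finite on the compactly supported data. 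Balancing these two competing requirements, rather than any single inequality, is the delicate point.
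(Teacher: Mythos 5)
First, note that this paper does not prove Theorem \ref{S:GLOBAL0} itself: it is quoted verbatim from \cite[Theorem 1.1]{JT}, and the underlying technique is only displayed later (Section \ref{sec:4}) when the same machinery is adapted to soliton perturbations. Your overall strategy is the correct one and coincides in spirit with that proof: pass to null coordinates, observe that both nonlinearities are exact null forms $Q_0$ (your computation of the $4\partial_u\partial_v$ form of \eqref{ecuacion_final} is right, and the preservation of the null structure under $\partial_x$ is exactly \eqref{derivadanullC}), control the singular coefficient via $\tilde\lambda\neq 0$ and smallness of $\|\tilde\Lambda\|_{L^\infty}$ as in \eqref{condicionlambda0}, and close a bootstrap in null-weighted norms. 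Where you genuinely diverge is the functional framework: the cited proof, following \cite{Luli2018, alinhac2009hyperbolic}, works with weighted $L^2$ energies and characteristic fluxes of the form \eqref{energies_0} with the weight $\varphi(u)=(1+|u|^2)^{1+\eta}$, closes the bootstrap through the energy inequality \eqref{EEnergia} and the relation \eqref{importante}, and extracts the needed pointwise decay $|L\phi|\lesssim \varepsilon(1+|\underline u|^2)^{-1/2-\eta/2}$ as a consequence (Lemma \ref{lema1}). You instead propose weighted $L^1$ characteristic norms plus $L^\infty$ bounds and integrate the transport equations directly. That route is classical for $1+1$ null-form systems and, properly set up, is arguably more elementary; the $L^2$ route has the advantage of being the one that survives differentiation, source terms, and the soliton background in Section \ref{sec:4}, and of producing the flux bounds $\mathcal F,\overline{\mathcal F}$ used there.

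There is one concrete gap in your bootstrap as written. After substituting the equation into $\sup_v\int\langle u\rangle^a|\partial_u\phi|\,du$, you face spacetime integrals such as $\iint\langle u\rangle^a|\partial_u\phi|\,|\partial_v\tilde\Lambda|\,du\,dv$, and these cannot be estimated by the product $\bigl(\sup_v\int\langle u\rangle^a|\partial_u\phi|\,du\bigr)\bigl(\sup_u\int|\partial_v\tilde\Lambda|\,dv\bigr)$: H\"older in mixed norms requires one factor in $L^1_u L^\infty_v$ and the other in $L^\infty_u L^1_v$, whereas your $\mathcal X$ places both factors in $L^\infty(\sup)$--$L^1(\int)$ form, and your plain (unweighted) $L^\infty$ pieces do not supply the missing decay in the leftover integration variable. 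The fix is standard but must be built into $\mathcal X$ from the start: either replace the characteristic norms by $\int\langle u\rangle^a\sup_v|\partial_u\phi|\,du$ (which reproduces itself under integration of the equation), or add weighted $L^\infty$ decay of the type $|\partial_v\phi|\lesssim\varepsilon\langle v\rangle^{-a}$ to the bootstrap --- the latter being precisely the role played by Lemma \ref{lema1} in the $L^2$ framework. With that modification your quadratic closing inequality $\mathcal X(T)\le C_0\varepsilon+C_1\mathcal X(T)^2$ is legitimate, and the rest of the argument (continuity, differentiation for higher regularity, finite speed of propagation for compact support and conservation of \eqref{S:energy}) is sound.
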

	In the same work \cite{JT}, it was also established that the family of solutions satisfying the hypotheses of Theorem \ref{S:GLOBAL0} is non-empty. Using ideas proposed in \cite{belinski1979stationary}, it was proved that there exists a family of solitons-like solutions for the PCF equation \eqref{S:eqPCF}. The construction and characterization of these solutions are briefly shown in the following section. 

\subsection{Soliton solutions}\label{1soliton}
As it was shown in \cite{belinskii1978integration} (see also \cite{belinski2001gravitational}), equation  \eqref{S:eqPCF} has $N$-soliton solutions. More precisely, in \cite{belinskii1978integration} it was proposed a transformation that constructs the so-called gravisolitons in the context of the vacuum Einstein equations in General Relativity, under certain conditions on coordinate symmetry. Using this transformation, Hadad \cite{yaronhadad_2013} explicitly showed the structure of the $N$-soliton for the PCF model \eqref{S:eqPCF}. The application of this transformation can be done since this model can be identified with the so-called \emph{reduced Einstein equation} given by
\begin{equation}\label{reducedEE}
		\partial_t\left(\alpha\partial_t g g^{-1}\right) -\partial_x\left( \alpha \partial_x g g^{-1} \right)=0, \quad (t,x)\in \mathbb R\times \mathbb R,
\end{equation}
with $\det g=\alpha$. Note that \eqref{reducedEE} with $\alpha=1$ reduces to \eqref{S:eqPCF}. 
It is important to note that although the identification of \eqref{S:eqPCF}  with  \eqref{reducedEE} can be made, this special class of solutions has no relevance for the gravitational field. However, even in the case ($\alpha\equiv 1$), the PCF model is sufficiently rich to produce complex dynamics and would formally have nontrivial solutions even when $\alpha$ is constant. In \cite{MT2023}, we considered the more demanding case when $\alpha$ is non-constant.

We will now briefly discuss the general structure of the 1-soliton of \eqref{S:eqPCF}, as well as the particular elements that characterize it. One starts using diagonal backgrounds, also called \emph{seed metric},  of the form 
\be\label{seedG}
g^{(0)} =(e^{\Lambda^{(0)}} ,e^{-\Lambda^{(0)}}),\quad
\partial_t^2\Lambda^{(0)}-\partial_x^2\Lambda^{(0)}=0.
\ee	
In this case, if we want to identify the solution in terms of the fields $\Lambda$ and $\phi$ in (\refeq{S:PCF}), we must set $\Lambda=\Lambda^{(0)}$, $\phi=n\pi,$ with $n\in \mathbb{Z}$, and $\alpha=1$. The gauge choice for us will be $n=0$. 	
\subsubsection{Singular solitons}	
Hadad \cite{yaronhadad_2013} described the 1-soliton solution, which is obtained by taking $\Lambda^{(0)}=t$ (time-like)  and $\phi^{(0)}=0$. Note that with this choice the energy in  \eqref{S:energy} is not well-defined. Indeed, the energy proposed in (\refeq{S:energy}) is not finite. Hadad computed the corresponding 1-soliton solution using Belinski-Zakharov techniques, obtaining  the corresponding solitons, as follows
	\begin{equation*}
		g^{(1)}=	\left[\begin{array}{cc}
			\dfrac{e^{t}Q_c(x-vt)}{Q_c(x-vt-x_0)} & -\dfrac{1}{c}Q_c(x-vt)\\
			-\dfrac{1}{c}Q_c(x-vt) & \dfrac{e^{-t}Q_c(x-vt)}{Q_c(x-vt+x_0)}
		\end{array} \right],
	\end{equation*}
	where, for a fixed parameter $\mu > 1$, one has 
	\[
	Q_c(\cdot)=\sqrt{c}\sech(\sqrt{c} (\cdot)), \quad c=\left(\dfrac{2\mu}{\mu^2-1}\right)^2, \quad v=-\dfrac{\mu^2+1}{2\mu}<-1, \quad \hbox{and}  \quad x_0=\dfrac{\ln |\mu|}{\sqrt{c}}.
	\]
	Notice that the first component of $g^{(1)}$ grows in time. Therefore, we have a superluminal traveling soliton which moves to the left  (if $\mu > 0$). In \cite{JT}, we proposed a modification of this ``degenerate'' soliton solution by cutting off the infinite energy part, profiting of the wave-like character of solutions $\Lambda^{(0)}$. Although it is not so clear that they are physically meaningful, these new solutions have finite energy and local well-posedness properties in a vicinity. The stability of this solution was not clear at the moment. 

\subsubsection{Finite energy, nonsingular solitons}
       Now, we consider smooth functions $\theta, \sigma \in  C^{2}_c(\mathbb R)$, and $0<\mu<1$. For any $\lambda>0$ and small $\varepsilon >0$ , let
\be\label{BackSeed}
\Lambda^{(0)}_{\varepsilon} (t,x) := \lambda + \varepsilon( \theta(x+t )+\sigma(x-t)), \quad \phi^{(0)}: =0.
\ee		
Clearly $\Lambda^{(0)}_{\varepsilon}$ {\bf solves the wave equation} in $1+1$  dimensions \eqref{seedG} and it has finite energy \eqref{S:energy}. This will be for us the background seed. Let us define the following functions:
\begin{equation}
\gamma_1  :=\lambda +\varepsilon \theta(x+t); \qquad  \gamma_2:= \varepsilon \sigma(x-t), \qquad  \gamma:= \gamma_1+\gamma_2.
\end{equation}
Now, the corresponding 1-soliton is given by (for full details see \cite[Section 5.2]{JT}):
\begin{equation}\label{1Soliton}
\begin{aligned}
	\hat{\Lambda}_{\varepsilon} \equiv B(t,x)= &~{} \arcosh\left(|v|\cosh(\gamma) -\frac{1}{\sqrt{c}}\tanh(\beta \gamma_1+\beta^{-1}\gamma_2)\sinh( \gamma) \right),\\
	\hat{\phi}_{\varepsilon} \equiv D(t,x)=  &~{}  \frac{\pi}{4}-\dfrac{1}{2}\arctan\left(\cosh(\beta \gamma_1+\beta^{-1}\gamma_2 )\cosh(\gamma) [\tanh(\beta \gamma_1+\beta^{-1}\gamma_{\color{black}2})+v\sqrt{c}\tanh(\gamma)  ]\right),
\end{aligned}	
\end{equation}
with $ \beta=\frac{\mu +1}{\mu -1}$. Similar to \cite{JT}, one has that $B$ is well-defined, $B(t,x) > 0 $ for all $t,x\in \mathbb{R}$, and for each $t$, $B$ is a bounded function. Again following \cite{JT}, one can write $B=  \tilde{\lambda} +\tilde B$, $\tilde{\lambda}:=\lim_{x\to \infty}B(t=0,x)>0$. Moreover, 
\[
	\tilde B |_{\{t=0\}}= \varepsilon\tilde B_0, \quad \mbox{with}\quad  \tilde B_0\in C_0^{2}(\mathbb{R}).
\]
where $\tilde B_0$ is bounded in $\varepsilon$. A similar computation can be done for $D$ and its time derivative, see \cite{JT}. Therefore, the setting of Theorem \ref{S:GLOBAL0} is satisfied.

In this paper, in order to get global solutions, we shall assume perturbed initial data  of the form
	\begin{equation}\label{initial_data}
		\begin{cases}
			(\Lambda, \phi)|_{\{t=0\}}= (B + \varepsilon z_0,D+ \varepsilon  s_0), & (z_0,s_0)\in C_c^{\infty}(\mathbb{R})^2, \\
			(\partial_t\Lambda,\partial_t \phi)|_{\{t=0\}}= (B_t + \varepsilon w_0, D_t + \varepsilon m_0), & (w_0,m_0) \in C_c^{\infty}( \mathbb{R})^2.  
		\end{cases}
	\end{equation} 
Then, by the previous non-degeneracy analysis and Theorem \ref{S:GLOBAL0}, there exists $\varepsilon_0$ such that if $\varepsilon < \varepsilon_0$, the unique solution remains smooth for all time and it has finite conserved energy \eqref{S:energy}. This does not ensure that the perturbations $[z,w,s,m](t)$ will remain small compared with $\tilde B$ and $\tilde D$ after a large time, but our goal will be to guarantee that if they are small at time zero, they will remain small at large time.

\subsection{Main results} Having described into detail the PCF 1-soliton solutions in the previous subsection, the purpose of this paper is to give a first proof of the fact that the 1-soliton \eqref{1Soliton} of the PCF model is orbitally stable under small perturbations, well-defined in the natural energy space associated to the problem. The stability study will be done by addressing equation \eqref{S:PCF} and using the description of the fields $B$ and $D$ given by \eqref{1Soliton}. Our main theorem is the following:

\begin{thm}\label{MT1}
There exists $\varepsilon_0>0$ such that if $0<\varepsilon<\varepsilon_0$, the following holds.
There exist $C ,\delta_0>0$ such that, if  $0<\delta<\delta_0$, and $[z_0,w_0,s_0,m_0]$ is given as in \eqref{initial_data}, then:

\begin{enumerate}
\item External energetic control. Assume that 
\[
\int \left( \frac{1}{2} (w_0^2+ z_{0,x}^2) +2\sinh^2(B+z_0) ( s_{0,x}^2+m_0^2) \right)(t,x)dx < \delta,
\]
\noindent
{\color{black}and that at time $t=0$ one has $\spp(\theta')\cup \spp(\sigma')\subseteq \{\xi \in \mathbb{R}: |\xi| < R \}$, for some $R>0$.}
Then% the following are satisfied:
\begin{itemize}
\item For all time, one has a crossed global control:
\begin{equation}\label{diferencia}
 \int_{\mathbb R} \left(\frac12 (z_x-w)^2+ 2\sinh^2(B+z)(s_x-m)^2 \right) (t,x)dx < 3\delta.
\end{equation}
\item Inside light-cone convergence. For any $v\in (-1,1)$ and $\omega(t)= t/ \log^{2}t$, one has 
\begin{equation}\label{velocidad_menor1}
\lim_{t\to +\infty}\int_{vt -\omega(t)}^{vt+ \omega(t)} \left( w^{2}+z_x^{2}+\sinh^{2}(B+z)(m^{2}+s_x^{2}) \right)(t,x)dx=0.
\end{equation}
\item  Exterior stability: for all time $t\geq 0$ {\color{black} and $R$ as above},
\begin{equation}\label{exterior_stability}
\int_{  |x+t| \geq R}   \left( \frac{1}{2} (w^2+ z_x^2) +2\sinh^2(B+z) ( s_x^2+m^2) \right)(t,x) dx < \delta.
\end{equation}
\end{itemize}
\smallskip
\item Full orbital stability. Assume now $[z_0,w_0,s_0,m_0]\in C_c^{\infty}(\mathbb R)^4$ be initial data as in \eqref{initial_data} such that
\begin{equation}\label{ICs}
\sum_{k=0,1} \int(1+ |x|^2)^{1+\eta} \left( (\partial_x^k w_0 )^2+ (\partial_x^{k+1} z_0)^2 +(\partial_x^k m_0 )^2 +(\partial_x^{k+1} s_0 )^2\right)dx < \delta^2,
%\mathcal{E}[z_0,w_0]  +\overline{\mathcal{E}}[s_0,m_0]  < \delta^2,
\end{equation}
for $0< \eta <\frac13$.
%(see \eqref{energies} for the definition of these norms). 
Then the corresponding global solution to \eqref{ecuacion_final} given as
\begin{equation}\label{deco}
(B+z, \partial_t B+ w, D + s, \partial_t D+ m) 
\end{equation}
satisfies (see \eqref{energies} for the precise definition of norms) % the same bounds for all times: 
\begin{equation}\label{Conclusion}
\sup_{t\geq 0} \left( \mathcal{E}(t)  +\overline{\mathcal{E}}(t) \right)  \leq {\color{black} C}\delta^2.
\end{equation}
\end{enumerate}
\end{thm}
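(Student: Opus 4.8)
The plan is to regard $(z,w,s,m)$ from the decomposition \eqref{deco} as a perturbation of the exact solution $(B,D)$ (so that $w=z_t$, $m=s_t$) and to combine the exact conservation of \eqref{S:energy}--\eqref{S:momentum} with the null vector field estimates of \cite{MT2023}. The algebraic backbone is that $E\pm P$ are conserved and, after completing squares,
\begin{equation*}
(E-P)[\Lambda,\Lambda_t,\phi,\phi_t]=\int\left(\tfrac12(\Lambda_x-\Lambda_t)^2+2\sinh^2\Lambda\,(\phi_x-\phi_t)^2\right)dx ,
\end{equation*}
with the analogous identity for $E+P$ using $\Lambda_x+\Lambda_t$, $\phi_x+\phi_t$. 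Setting
\begin{equation*}
\mathcal E_p(t):=\int\Big(\tfrac12(w^2+z_x^2)+2\sinh^2(B+z)(s_x^2+m^2)\Big)dx,\qquad
\mathcal P_p(t):=\int\big(z_xw+4\sinh^2(B+z)\,s_xm\big)dx,
\end{equation*}
a direct completion of squares shows that the left-hand side of \eqref{diferencia} equals $\mathcal E_p-\mathcal P_p$, and Cauchy--Schwarz gives $0\le \mathcal E_p-\mathcal P_p\le 2\mathcal E_p$; the hypothesis of part (1) is exactly $\mathcal E_p(0)<\delta$.

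For part (1) I would first prove the crossed control \eqref{diferencia}. Because the seed \eqref{BackSeed} depends on $\gamma_1=\lambda+\varepsilon\theta(x+t)$ and $\gamma_2=\varepsilon\sigma(x-t)$ only through $x\pm t$, the operator $\partial_x-\partial_t$ kills the $\gamma_1$-dependence, so $B_x-B_t$ and $D_x-D_t$ are $O(\varepsilon)$ and supported where $\sigma'(x-t)\neq0$, hence integrable along the flow. Subtracting the soliton equation from \eqref{S:PCF} yields the transport identity $(\partial_t+\partial_x)(z_x-w)=2\sinh(2\Lambda)(\phi_x^2-\phi_t^2)-2\sinh(2B)(D_x^2-D_t^2)$, whose right-hand side is a genuine null form (products of transversal null derivatives), linear-plus-higher in the perturbation with localized $O(\varepsilon)$ soliton coefficients, together with the companion $\phi$-identity weighted by $\sinh^2\Lambda$. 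Testing against $z_x-w$ (resp. $s_x-m$) and integrating, the source is bounded by a small factor times the perturbation energy, so the total variation of $\mathcal E_p-\mathcal P_p$ over $[0,\infty)$ is $\lesssim(\varepsilon+\delta)\,\sup_t\mathcal E_p$; combined with the initial bound $\mathcal E_p(0)-\mathcal P_p(0)<2\delta$ this gives $<3\delta$ after choosing $\varepsilon_0$ small, run as a continuity argument alongside the energy bound. The exterior estimate \eqref{exterior_stability} I would obtain from the local conservation law $\partial_t e+\partial_x p=0$ (with $e,p$ the densities of \eqref{S:energy}--\eqref{S:momentum}) and the pointwise positivity $e\pm p\ge0$: the set $\{x+t\ge R\}$ is bounded by an outgoing characteristic, so its energy is monotone up to the localized $O(\varepsilon)$ soliton source and never exceeds its initial value $\delta$. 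Finally, the interior convergence \eqref{velocidad_menor1} is a virial/Morawetz estimate in the frame moving at speed $v\in(-1,1)$; since for large $t$ the soliton itself concentrates on the light cones $x\approx\pm t$ (the supports of $\theta,\sigma$), the comoving window of sublinear width $\omega(t)=t/\log^2t$ avoids it, and a logarithmically weighted virial functional yields a time-integrable space--time bound that forces the localized energy to vanish.

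For part (2), the full orbital stability \eqref{Conclusion}, I would run the $r^p$-weighted vector field method of \cite{MT2023} adapted to the null variables $x\pm t$. The weighted data \eqref{ICs}, with weight $(1+|x|^2)^{1+\eta}$ and one extra derivative, is precisely what is needed to initialize the higher/weighted energies $\mathcal E,\overline{\mathcal E}$ of \eqref{energies}. The heart is a bootstrap: assuming $\mathcal E(t)+\overline{\mathcal E}(t)\le 2C\delta^2$ on $[0,T]$, one closes the bound $\le C\delta^2$ from (i) the basic energy identity, (ii) a weighted-energy (Morawetz) estimate with multipliers $(1+|x\pm t|^2)^{p}\,\partial_\pm$, and (iii) the null structure of the nonlinearities in \eqref{S:PCF}, using that $B\ge\tilde\lambda>0$ keeps $\sinh^2\Lambda$ away from zero and that the soliton coefficients are localized and $O(\varepsilon)$ in $x\pm t$. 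The range $0<\eta<\tfrac13$ is dictated by the integrability of the weighted error terms.

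The main obstacle is closing this weighted bootstrap. In the highest-weight estimate the quasilinear couplings $\sinh(2\Lambda)(\phi_x^2-\phi_t^2)$ and $\tfrac{\sinh(2\Lambda)}{\sinh^2\Lambda}(\phi_t\Lambda_t-\phi_x\Lambda_x)$ produce interaction terms that are only borderline integrable in time; one must exploit the null-form cancellations — products of \emph{transversal} null derivatives rather than squares of a single one — together with the exponent constraint $\eta<\tfrac13$ to absorb them, while simultaneously controlling the exponentially large weight $\sinh^2(B+z)$ whose size is governed by the background $B$. Balancing this large weight against the decay supplied by the $r^p$ multiplier is the delicate point, and it is where the smallness of $\varepsilon$ and the precise range of $\eta$ are consumed.
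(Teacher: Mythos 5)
Your overall architecture coincides with the paper's: completion of squares in $E\pm P$ for the crossed control, a localized energy--flux identity for the exterior bound, interior decay for \eqref{velocidad_menor1}, and a weighted null-energy bootstrap in the variables $u,\underline u$ (the Alinhac/Luli--Yang--Yu estimate \eqref{EEnergia}, which is what you are calling the ``$r^p$'' method) for \eqref{Conclusion}. The exterior and interior items go through essentially as you describe --- although \eqref{velocidad_menor1} is in the paper a one-line corollary of Theorem \ref{LTD} (valid for \emph{any} finite-energy global solution) together with the compact support of $\partial_{t,x}B,\partial_{t,x}D$ in $x\pm t$, so your comoving virial computation re-derives an ingredient that is simply quoted --- and your bootstrap for part (2) is the scheme of Section \ref{sec:4}, with the same role for the null forms, the lower bound $\sinh^2(B+z)\geq c_0(\tilde\lambda)^2$, the localization and $O(\varepsilon)$ size of the soliton coefficients, and the restriction $0<\eta<1/3$.

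The genuine gap is in your proof of \eqref{diferencia}. You propose to propagate $\mathcal E_p-\mathcal P_p$ via the transport identity $(\partial_t+\partial_x)(z_x-w)=2\sinh(2\Lambda)(\phi_x^2-\phi_t^2)-2\sinh(2B)(D_x^2-D_t^2)$ and a Gr\"onwall/continuity argument with error $\lesssim(\varepsilon+\delta)\sup_t\mathcal E_p$. This cannot close: the source contains $\phi_x^2-\phi_t^2=(\phi_x-\phi_t)(\phi_x+\phi_t)$, and the factor $\phi_x+\phi_t$ (equivalently $s_x+m$, and likewise $z_x+w$) is exactly the component of the perturbation that is \emph{not} controlled under the hypotheses of part (1) --- the paper states explicitly that controlling each component separately is open at this level of data, and only part (2), with the much stronger weighted hypothesis \eqref{ICs}, achieves it. Hence $\sup_t\mathcal E_p$ is not an admissible quantity in your continuity argument, and the time integral over $[0,\infty)$ of the quadratic-in-perturbation source has no reason to be finite. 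The paper avoids any time integration: it uses that $E-P$ is exactly conserved and that its density $2(e-p)=(\Lambda_x-\Lambda_t)^2+4\sinh^2(\Lambda)(\phi_x-\phi_t)^2$ is a pointwise perfect square in the ``minus'' null derivatives only; expanding around the soliton as in \eqref{expansionP}, the pure soliton part and the cross terms involve only $B_x-B_t$ and $D_x-D_t$, which are $O(\varepsilon)$, localized, and independent of the uncontrolled ``plus'' derivatives, so \eqref{diferencia} follows from the conservation identity and Cauchy--Schwarz alone. You should replace your transport/Gr\"onwall step by this exact-conservation argument.
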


\medskip
In the next lines, we are going to prove statements \eqref{diferencia}, \eqref{velocidad_menor1},
\eqref{exterior_stability}, 
\eqref{Conclusion}. First of all notice that \eqref{diferencia} is a global in time and space property satisfied by perturbations of solitons. However, having direct control over each component is an important open question that probably requires the introduction of better decay properties of the initial data, as is done in \eqref{ICs}. Additionally, condition \eqref{ICs} is part of a more general energy norm condition, described as
  \[
  \mathcal{E}[z_0,w_0]  +\overline{\mathcal{E}}[s_0,m_0]  < \delta^2,
  \]
(see \eqref{energies} for the definition of these norms). It turns out that these norms are key to translate the smallness information of the problem, in addition to the smallness of the 1-soliton solution, represented by the parameter $\varepsilon$, which enters when computing space and time derivatives of the solution.

One may think that standard energetic Lyapunov control in terms of energy and momentum is the key to prove orbital stability. Unfortunately, in the case of PCF solitons, this is not the case. Indeed, energy and momentum are useless because they do not control the central region around the soliton. The best example of this fact is the estimate \eqref{diferencia}. The proof of Theorem \ref{MT1} needs new ideas. In \cite{JT}, the author showed that perturbations of \emph{small} 1-solitons lead to a global solution, but the smallness of the perturbation could not be preserved. In this paper we avoid this problem by proposing a new idea obtained from the general case of Einstein's field equations under the Belinski-Zakharov formalism \cite{MT2023}. Solitons are perturbations of the nonlinear primordial equations, and their size will be controlled using well-defined weighted norms. It will be particularly important to notice that solitons are assumed sufficiently small, but perturbations are considered smaller.

\subsection*{Organization of this work} This work is organized as follows. In Section \ref{sec:2a} some important definitions, notation and previous work are stated. In Section \ref{sec:2} we will introduce energy and momentum for perturbations of the soliton solution, and compute useful virial identities valid for data only in the energy space. Finally, in Section \ref{sec:4} we prove the nonlinear stability of perturbations of nonsingular PCF solitons. 

\subsection*{Acknowledgments} Part of this work was done while the third author visited the Department of Mathematics of the University of C\'ordoba (Spain). She deeply thanks the professors for their hospitality during some research stays. {\color{black} We thank the referees for their valuable comments and insights that helped to improve a first version of this work.}

\section{Preliminaries}\label{sec:2a}

\subsection{Soliton profiles}
Recall the solitons $B$ and $D$ defined in \eqref{1Soliton}. We define their time and space derivatives as follows: 
 \begin{equation*}
 \begin{cases}
f_1\equiv f_1(t,x;\lambda,\varepsilon):= |v|\tanh(\gamma)-\frac{\beta}{\sqrt{c}}\sech^2(\beta\gamma_1+\beta^{-1}\gamma_2)\tanh(\gamma)-\frac{1}{\sqrt{c}}\tanh(\beta\gamma_1+\beta^{-1}\gamma_2),\\
f_2\equiv f_2(t,x;\lambda,\varepsilon):= |v|\tanh(\gamma)-\frac{\beta^{-1}}{\sqrt{c}}\sech^2(\beta\gamma_1+\beta^{-1}\gamma_2)\tanh(\gamma)-\frac{1}{\sqrt{c}}\tanh(\beta\gamma_1+\beta^{-1}\gamma_2),\\
f_3\equiv f_3(t,x;\lambda,\varepsilon):=\sech(\gamma)\sqrt{\left(|v|\cosh(\gamma)-\frac{1}{\sqrt{c}}\sinh(\gamma)\tanh(\beta\gamma_1+\beta^{-1}\gamma_2)\right)^2-1}; 
\end{cases}
 \end{equation*}
then
\begin{equation}\label{pt_B}
 \partial_t B =\dfrac{\varepsilon(\theta^{\prime}(x+t)f_1-\sigma'(x-t)f_2)}{f_3}, \qquad \partial_x B =\dfrac{\varepsilon(\theta^{\prime}(x+t)f_1+\sigma'(x-t)f_2)}{f_3}. 
  \end{equation} 
Clearly $ \partial_t B |_{\{t=0\}} \in C_0^{1}(\mathbb{R})$. For $D$, we define
\begin{equation*}
\begin{cases}
&g_1\equiv g_1(t,x;\lambda,\varepsilon):=\beta+v\sqrt{c}+(1+\beta v\sqrt{c})\tanh(\beta\gamma_1+\beta^{-1}\gamma_2)\tanh(\gamma),\\
&g_2\equiv  g_2(t,x; \lambda,\varepsilon):=\beta^{-1}+v\sqrt{c}+(1+\beta^{-1} v\sqrt{c})\tanh(\beta\gamma_1+\beta^{-1}\gamma_2)\tanh(\gamma),\\
 &g_3\equiv g_3(t,x;\lambda,\varepsilon):=2\sech(\beta\gamma_1+\beta^{-1}\gamma_2)\sech(\gamma) \\
 &\qquad \qquad  \times\left(\left(\cosh(\gamma)\sinh(\beta \gamma_1+\beta^{-1}\gamma_2)+v \sqrt{c}\sinh(\gamma)\cosh(\beta\gamma_1+\beta^{-1}\gamma_2)\right)^2+1\right),
\end{cases}
\end{equation*}
then
 \begin{equation}\label{pt_D}
 \begin{aligned}
	\partial_t D= \dfrac{-\varepsilon(\theta^{\prime}(t+x)g_1-\sigma'(x-t)g_2)}{g_3}, \qquad \partial_x D= \dfrac{-\varepsilon(\theta^{\prime}(t+x)g_1+\sigma'(x-t)g_2)}{g_3},
	\end{aligned}
\end{equation}
which is also a localized function. Notice that $\partial_t B,\partial_t D \in L^2(\mathbb{R})$, and the total energy \eqref{S:energy} is finite. 

\subsection{Previous bounds on global solutions} Let us recall some previous information in \cite{JT} about the global solution in our problem. Consider
\begin{equation*}
	u:=\dfrac{t-x}{2}, \quad \underline{u}:= \dfrac{t+x}{2},
\end{equation*}
and the two null vector fields, defined globally as 
\[
L=\partial_t + \partial_x,\quad \underline{L}= \partial_t - \partial_x.
\]
%Let $0 < \delta < 1$. 
${\color{black}\Sigma}_{t_0}$  denotes the region %the following time slice in $\mathbb{R}^{1+1}$: 
\begin{align}\label{S_t}
	{\color{black}\Sigma}_{t_0} := \{(t,x):\; t=t_0 \}.
\end{align}
$D_{t_0}$ denotes the following region of spacetime 
\begin{equation*}%\label{D_t}
	D_{t_0}:= \{ (t,x):\;  0 \leq t \leq t_0 \}, \quad D_{t_0}=\bigcup_{0\leq t \leq t_0} {\color{black}\Sigma}_{t_0}.
\end{equation*}
The level sets of the functions $u$ and $\underline{u}$ define two global null foliations of $D_{t_0}$. More precisely, given $t_0>0$, $u_0$ and $\underline{u}_0$, we define the rightward null curve segment {\color{black}$C_{u_0}$} as:
\begin{equation}\label{C1}
	C_{u_0} := \left\{(t,x): \; u=\frac{t-x}{2} =u_0,\, 0\leq t \leq t_0\right\},
\end{equation}
and the segment of the null curve to the left {\color{black}$\underline{C}_{\underline{u}_0}$}as:
\begin{equation}\label{C2}
	\underline{C}_{\underline{u}_0} := \left\{(t,x): \; \underline{u}=\frac{t+x}{2} =\underline{u}_0,\, 0\leq t \leq t_0\right\}.
\end{equation}
The space time region $D_{t_0}$ is foliated by {\color{black}$\underline{C}_{\underline{u}_0}$} for $\underline{u}\in \mathbb{R}$, and by {\color{black}$C_{u_0}$} for $u\in \mathbb{R}$.

In the  same way as in \cite{Luli2018, JT} we consider the weight function $\varphi$ defined as 
\begin{equation}\label{varphi}
\varphi(u):=(1+|u|^2)^{1+\eta}\quad  \mbox{with} \quad 0< \eta<1/3.
\end{equation}
Finally, we will consider the following energy estimate proposed in \cite{alinhac2009hyperbolic, Luli2018} for the scalar linear wave equation $\square \psi = \rho.$ There exists $C_0>0$ such that 

%($\tau\in [0, t]$ in $\underline{C}_{\underline{u}}$ and $C_{u}$). 
\begin{equation}
	\begin{aligned}\label{EEnergia}
		& ~{}\int_{\Sigma_t} \left[ \varphi(u)\abs{\underline{L}\psi}^2+ \varphi(\underline u) \abs{L\psi}^2\right]dx \\
		& \qquad +\sup_{\underline u\in \mathbb{R}}\int_{\underline{C}_{\underline u}} \varphi(u)\abs{\underline{L}\psi}^2d\tau + \sup_{u\in \mathbb{R}}\int_{C_{u}} \varphi(\underline u)\abs{L\psi}^2 d\tau \\
		&~{} \qquad \leq C_0\int_{\Sigma_0}\left[ \varphi(u)\abs{\underline{L}\psi}^2+\varphi(\underline u)\abs{L\psi}^2 \right]dx + C_0 \iint_{D_t} \left[\varphi(u)\abs{\underline{L}\psi} + \varphi(\underline u)\abs{L\psi}\right]\abs{\rho} d\tau dx.
	\end{aligned}
\end{equation}
Based on this estimate, and in the setting of equation \eqref{ecuacion_final}, we define the space-time weighted energy norms valid for $k=0,1$:
\begin{equation}\label{energies_0}
\begin{aligned}
	&	\mathcal{E}_k[\tilde \Lambda](t)=\int_{{\color{black}\Sigma}_t} \left[ \varphi(u)\abs{\underline{L}\partial_x^k\tilde{\Lambda}}^2+  \varphi(\underline{u})\abs{L\partial_x^k \tilde{\Lambda}}^2\right]dx,\\
	&	\overline{\mathcal{E}}_k[ {\color{black}\phi}](t)=\int_{{\color{black}\Sigma}_t} \left[ \varphi(u) \abs{\underline{L}\partial_x^k\phi}^2+  \varphi(\underline{u})\abs{L\partial_x^k \phi}^2\right]dx,\\
	& \mathcal{F}_k[\tilde \Lambda](t)= \sup_{{\color{black} \underline u}\in \mathbb{R}} \int_{{\color{black} \underline{C}_{\underline u}}} \varphi(u) \left| \underline{L}\partial_x^k\tilde{\Lambda} \right|^2ds+ \sup_{{\color{black} u}\in \mathbb{R}} \int_{{\color{black} C_{u}}}\varphi(\underline{u}) \abs{L\partial_x^k \tilde{\Lambda}}^2ds,\\
	& \overline{\mathcal{F}}_k[ {\color{black}\phi}](t)= \sup_{{\color{black}\underline u}\in \mathbb{R}} \int_{C_{{\color{black} \underline u}}} \varphi(u) \left| \underline{L}\partial_x^k\phi \right|^2ds+ \sup_{{\color{black} u}\in \mathbb{R}} \int_{C_{{\color{black} u}}} \varphi(\underline{u})\abs{L\partial_x^k \phi}^2ds.
\end{aligned}
\end{equation}
Finally, we define the total energy norms as follows:
\[
\mathcal{E}(t)= \mathcal{E}_0 (t)+\mathcal{E}_1(t).
\] 
Analogously one defines $\mathcal{F}(t)$, $\overline{\mathcal{E}}(t)$, and $\overline{\mathcal{F}}(t).$  Then from \cite{JT} we know that there exists $C>0$ such that for all $t\geq 0$,
\[%begin{align}\label{supuesto}
\mathcal{E}(t)+\mathcal{F}(t) \leq C\varepsilon^2,	\qquad \overline{\mathcal{E}}(t)+ \overline{\mathcal{F}}(t) \leq C\varepsilon^2, %\label{supuesto1},
\]%end{align}
and ($\tilde{\lambda}:=\lim_{x\to \infty}B(t=0,x)>0$)
\begin{equation}\label{condicionlambda0}
	\sup_{t \geq 0} \norm{\tilde{\Lambda}}_{L^{\infty}(\mathbb{R})} \leq \dfrac{\tilde \lambda}{2}.
\end{equation}
From this last fact, we have the following corollary:

\begin{cor}\label{condicionlambda_z}
Let $z(t)$ be defined in \eqref{deco}. One has for all time $t\geq 0$,
\begin{equation}\label{cotas}
\begin{aligned}
 0<c_0(\tilde \lambda) \leq \sinh (B + z) \leq c_1(\tilde \lambda),\quad 0<d_0(\tilde \lambda) \leq \cosh (B + z) \leq d_1(\tilde \lambda).
\end{aligned}
\end{equation}
% Similar estimates hold for $\sinh^2 (\Lambda + z)$ and $\sinh (2 (\Lambda + z))$.
\end{cor}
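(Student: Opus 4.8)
The plan is to read off a uniform two-sided bound on $B+z$ directly from the estimate \eqref{condicionlambda0} and then invoke the monotonicity of $\sinh$ and $\cosh$. No new analytic input is needed; everything reduces to correct bookkeeping of the decomposition \eqref{deco} together with the already established $L^\infty$ control.

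First I would record the key identification. By \eqref{deco} the first component of the global solution is exactly $\Lambda = B+z$, and writing $B = \tilde\lambda + \tilde B$ with $\tilde\lambda>0$, the deviation of $\Lambda$ from its constant background is
\[
\tilde\Lambda = \Lambda - \tilde\lambda = \tilde B + z .
\]
Thus the quantity controlled by \eqref{condicionlambda0} is precisely $\tilde\Lambda = B + z - \tilde\lambda$, incorporating both the soliton's own deviation $\tilde B$ and the perturbation $z$. This is the only step that requires attention: the $L^\infty$ bound applies to the combined quantity $B+z-\tilde\lambda$, not to $z$ or $\tilde B$ separately.

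Next, from \eqref{condicionlambda0} I obtain, for all $t\geq 0$ and all $x\in\mathbb R$,
\[
\abs{B + z - \tilde\lambda} = \abs{\tilde\Lambda} \leq \norm{\tilde\Lambda}_{L^\infty(\mathbb R)} \leq \frac{\tilde\lambda}{2},
\]
and hence the pointwise, uniform-in-time enclosure
\[
\frac{\tilde\lambda}{2} \leq B + z \leq \frac{3\tilde\lambda}{2}.
\]
In particular the argument $B+z$ stays in a fixed \emph{strictly positive} interval.

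Finally, since $\sinh$ is strictly increasing on $\mathbb R$ and $\cosh$ is strictly increasing on $[0,\infty)$, applying these monotonicities to the enclosure above yields \eqref{cotas} with the explicit constants
\[
c_0(\tilde\lambda) = \sinh\!\left(\tfrac{\tilde\lambda}{2}\right), \quad c_1(\tilde\lambda) = \sinh\!\left(\tfrac{3\tilde\lambda}{2}\right), \quad d_0(\tilde\lambda) = \cosh\!\left(\tfrac{\tilde\lambda}{2}\right), \quad d_1(\tilde\lambda) = \cosh\!\left(\tfrac{3\tilde\lambda}{2}\right),
\]
all of which are strictly positive precisely because $\tilde\lambda/2 > 0$. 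There is no genuine obstacle in this corollary; the substance lies entirely in \eqref{condicionlambda0}, and the positivity of the lower bounds $c_0,d_0$ is the payoff of having the background constant $\tilde\lambda$ strictly bounded away from zero in the first step.
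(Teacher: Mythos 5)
Your proof is correct and follows essentially the same route as the paper: both identify $B+z=\tilde\lambda+\tilde\Lambda$, invoke the uniform bound \eqref{condicionlambda0} to confine $B+z$ to $[\tfrac{\tilde\lambda}{2},\tfrac{3\tilde\lambda}{2}]$, and conclude by monotonicity of $\sinh$ and $\cosh$. Your version merely makes the constants $c_0,c_1,d_0,d_1$ explicit, which the paper leaves implicit.
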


\begin{proof}
Recall $B$ in \eqref{1Soliton}. Since $B+z = \tilde \lambda + \tilde \Lambda$, we first have $ \frac{\lambda}4 \leq \gamma=\lambda+\varepsilon (\theta(\xi)+\sigma(\xi))\leq \frac54\lambda$, and from \eqref{condicionlambda0},
\[
\frac12 \tilde \lambda \leq |B+z| \leq \frac32 \tilde \lambda. %\norm{z(t)}_{L^{\infty}} 
\]
The remaining bounds in \eqref{cotas} are direct consequences of the previous inequality.
\end{proof}

\subsection{Local decay} Recall the following result from \cite{JT}:

	\begin{thm}\label{LTD}
	Under a finite energy assumption, every globally defined solution to the PCF model satisfies the following convergence property: for any $v\in (-1,1)$ and $\omega(t)= t/ \log^{2}t$, one has 
	\[
	\lim_{t\to +\infty}\int_{vt -\omega(t)}^{vt+ \omega(t)}
		\left((\partial_{t}\Lambda)^{2}+(\partial_{x}\Lambda)^{2}+\sinh^{2}{\Lambda}((\partial_{t}\phi)^{2}+(\partial_{x}\phi)^{2}) \right)(t,x)dx=0.
	\]
	\end{thm}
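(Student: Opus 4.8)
The plan is to prove the local decay with no virial computation at all, by exploiting that the principal chiral field is classically conformally invariant in $1+1$ dimensions, so that its energy--momentum tensor is not only conserved but traceless. Writing $e$ for the energy density in \eqref{S:energy} and $p$ for the momentum density in \eqref{S:momentum},
\[
e=\tfrac12\big((\partial_x\Lambda)^2+(\partial_t\Lambda)^2\big)+2\sinh^2(\Lambda)\big((\partial_x\phi)^2+(\partial_t\phi)^2\big),\qquad p=\partial_x\Lambda\,\partial_t\Lambda+4\sinh^2(\Lambda)\,\partial_x\phi\,\partial_t\phi ,
\]
I expect that \eqref{S:PCF} forces the two pointwise identities
\[
\partial_t e=\partial_x p,\qquad \partial_t p=\partial_x e .
\]
I would verify both by direct substitution of the two equations of \eqref{S:PCF}: the nonlinear coefficients $\sinh(2\Lambda)$ and $\sinh(2\Lambda)/\sinh^2(\Lambda)=2\coth(\Lambda)$ cancel exactly, which is the analytic manifestation of tracelessness.

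Next I would introduce the chiral densities $e_\pm:=\tfrac12(e\pm p)$. The elementary bound $|p|\le e$, which is merely Cauchy--Schwarz together with the positivity of the weight $\sinh^2(\Lambda)$ (indeed $|\partial_x\Lambda\,\partial_t\Lambda|\le\tfrac12((\partial_x\Lambda)^2+(\partial_t\Lambda)^2)$ and $4\sinh^2(\Lambda)|\partial_x\phi\,\partial_t\phi|\le 2\sinh^2(\Lambda)((\partial_x\phi)^2+(\partial_t\phi)^2)$), yields $e_\pm\ge0$. Combining the two identities above gives the pure transport equations $(\partial_t\mp\partial_x)e_\pm=0$, so that
\[
e_+(t,x)=e_+(0,x+t),\qquad e_-(t,x)=e_-(0,x-t),
\]
each a nonnegative $L^1(\mathbb R)$ function whose mass is bounded by the finite, conserved total energy $E$. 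In other words, the energy density splits into a rigidly left--translating and a rigidly right--translating half.

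With this in hand the conclusion is immediate. Since the integrand in the statement is pointwise bounded by $2e=2(e_++e_-)$, it suffices to control each half on the window. Changing variables,
\[
\int_{vt-\omega(t)}^{vt+\omega(t)}e_+(t,x)\,dx=\int_{(1+v)t-\omega(t)}^{(1+v)t+\omega(t)}e_+(0,y)\,dy,\qquad \int_{vt-\omega(t)}^{vt+\omega(t)}e_-(t,x)\,dx=\int_{(v-1)t-\omega(t)}^{(v-1)t+\omega(t)}e_-(0,y)\,dy .
\]
Because $v\in(-1,1)$ and $\omega(t)=t/\log^2 t=o(t)$, the first interval has left endpoint $(1+v)t-\omega(t)\to+\infty$ and the second has right endpoint $(v-1)t+\omega(t)\to-\infty$; hence each integral is dominated by an $L^1$ tail of $e_\pm(0,\cdot)$ and tends to $0$. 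This yields the claimed limit, and in fact shows it holds for any window $\omega(t)=o(t)$.

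The main obstacle is not the algebra but the justification of the two conservation identities, and of the resulting transport, at the regularity of a mere finite--energy solution: the coefficient $\sinh(2\Lambda)/\sinh^2(\Lambda)$ is singular where $\Lambda$ vanishes, exactly the delicate point recorded after \eqref{S:energy}, and the weight $\sinh^2(\Lambda)$ may in principle grow. I would therefore carry out the computation for smooth solutions and pass to the limit by approximation, using the nondegeneracy and boundedness of $\Lambda$ available in our setting through \eqref{condicionlambda0} to keep $e_\pm$ genuinely nonnegative, integrable, and the transport meaningful in the distributional sense.
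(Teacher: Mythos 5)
Your argument is correct, and it is a genuinely different route from the one the paper relies on. Note first that the paper does not prove Theorem \ref{LTD} here at all: it imports it from \cite{JT}, where the proof is a virial/weighted-momentum computation (this is what forces the specific window $\omega(t)=t/\log^2 t$, whose role is to make $\omega'(t)$ integrable in time). The two pointwise identities you need, $\partial_t e=\partial_x p$ and $\partial_t p=\partial_x e$, are exactly \cite[Lemma 4.2]{JT} and are restated verbatim in the proof of Lemma \ref{VR} of this paper; they do check out by direct substitution of \eqref{S:PCF} (the $\sinh(2\Lambda)$ and $2\coth(\Lambda)$ coefficients cancel as you predict), reflecting that the PCF system is a $1+1$ wave map whose stress tensor is traceless. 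From there your chiral splitting $e_\pm=\tfrac12(e\pm p)=\tfrac14(\partial_x\Lambda\pm\partial_t\Lambda)^2+\sinh^2(\Lambda)(\partial_x\phi\pm\partial_t\phi)^2\ge 0$ with $(\partial_t\mp\partial_x)e_\pm=0$ is the classical null-energy conservation for two-dimensional sigma models, and the change of variables plus the $L^1$-tail argument is airtight; it even yields the conclusion for every window $\omega(t)=o(t)$, which is strictly stronger than the stated result. The trade-off is worth recording: the virial approach is robust and survives when tracelessness is destroyed (e.g.\ the nonconstant-$\alpha$ reduced Einstein equation \eqref{reducedEE} treated in \cite{MT2023}, or models with potentials), whereas your argument hinges entirely on the exact cancellation and collapses the moment $T_{+-}\neq 0$. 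The only genuine obligation you have left open is the one you flag yourself: justifying the pointwise identities and the distributional transport at the regularity of a bare finite-energy solution, since the theorem is stated in that generality and the coefficient $\sinh(2\Lambda)/\sinh^2(\Lambda)$ degenerates where $\Lambda$ vanishes. For the solutions actually used in this paper (smooth, with $\Lambda$ bounded away from zero by \eqref{condicionlambda0}) this is harmless, and uniqueness for the constant-coefficient transport equation makes the approximation step routine, so I regard your proof as complete for the purposes of establishing \eqref{velocidad_menor1}.
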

	This result shows that finite-energy global PCF solutions must decay inside the light cone. Notice that this is a property satisfied by any globally-defined finite-energy solution. An immediate consequence of the previous result is the following decay property, which proves \eqref{velocidad_menor1}.

\begin{cor}\label{cor2p2}
For any $v\in (-1,1)$ and $\omega(t)= t/ \log^{2}t$, one has 
	\[
	\lim_{t\to +\infty}\int_{vt -\omega(t)}^{vt+ \omega(t)}
		\left( w^{2}+z_x^{2}+\sinh^{2}(B+z)(m^{2}+s_x^{2}) \right)(t,x)dx=0.
	\]
\end{cor}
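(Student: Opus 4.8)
The plan is to obtain this as an immediate consequence of Theorem \ref{LTD}, using the decomposition \eqref{deco} together with the localization of the soliton profile near the light cone. First I would record that under \eqref{deco} the global solution satisfies $\Lambda = B + z$ and $\phi = D + s$, so that
\[
\partial_t\Lambda = \partial_t B + w, \qquad \partial_x\Lambda = \partial_x B + z_x, \qquad \partial_t\phi = \partial_t D + m, \qquad \partial_x\phi = \partial_x D + s_x,
\]
while $\sinh^2\Lambda = \sinh^2(B+z)$ identically. Theorem \ref{LTD} applied to $(\Lambda,\phi)$ yields the vanishing of the interior integral of $(\partial_t\Lambda)^2+(\partial_x\Lambda)^2+\sinh^2\Lambda\big((\partial_t\phi)^2+(\partial_x\phi)^2\big)$, so it suffices to show that on the window $[vt-\omega(t),vt+\omega(t)]$ this integrand coincides, for all sufficiently large $t$, with $w^2+z_x^2+\sinh^2(B+z)(m^2+s_x^2)$.

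The key point is the localization of the soliton data. By \eqref{pt_B} and \eqref{pt_D}, each of $\partial_t B,\partial_x B,\partial_t D,\partial_x D$ is a combination of terms carrying a factor $\theta'(x+t)$ or $\sigma'(x-t)$, and $\theta,\sigma\in C_c^2(\mathbb R)$ by \eqref{BackSeed}. Hence, choosing $M>0$ with $\supp\theta'\cup\supp\sigma'\subset[-M,M]$, all these derivatives vanish unless $|x+t|\le M$ or $|x-t|\le M$; that is, they are supported in the two thin null strips clustered around the light cone $|x|=t$.

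It remains to check that the integration window escapes these strips. Since $|v|<1$ and $\omega(t)=t/\log^2 t=o(t)$, for large $t$ one has
\[
vt+\omega(t)=t\Big(v+\tfrac{1}{\log^2 t}\Big)<t-M, \qquad vt-\omega(t)=t\Big(v-\tfrac{1}{\log^2 t}\Big)>-t+M,
\]
because the coefficients $1-v-\tfrac{1}{\log^2 t}$ and $1+v-\tfrac{1}{\log^2 t}$ stay bounded below by a positive constant while $M$ is fixed. Thus, for $t$ large, the window lies strictly inside $(-t+M,\,t-M)$ and is disjoint from both null strips, so $\partial_t B=\partial_x B=\partial_t D=\partial_x D=0$ there. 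On this window the identities above reduce to $\partial_t\Lambda=w$, $\partial_x\Lambda=z_x$, $\partial_t\phi=m$, $\partial_x\phi=s_x$, and the two integrands agree exactly. Letting $t\to+\infty$ and invoking Theorem \ref{LTD} gives the conclusion.

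I do not anticipate a genuine obstacle: the argument is a soft finite-propagation / localization statement. The only point needing care is the elementary verification that the sublinearly growing window $\omega(t)=o(t)$ remains strictly inside the light cone $(-t+M,\,t-M)$ for large $t$, which is precisely what the two displayed inequalities encode.
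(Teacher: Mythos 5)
Your proof is correct and follows essentially the same route as the paper: the authors likewise deduce the corollary directly from Theorem \ref{LTD} together with the observation that $\partial_t B,\partial_x B,\partial_t D,\partial_x D$ are supported in null strips near the light cone, which the interior window $[vt-\omega(t),vt+\omega(t)]$ with $|v|<1$ eventually avoids. Your write-up merely makes explicit the elementary verification that the window escapes those strips, which the paper leaves implicit.
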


\begin{proof}
An immediate consequence of the Theorem \ref{LTD} and the fact that $\partial_t B, \partial_x B$ and $\partial_t D, \partial_x D$ are supported in $|x+t|\leq R$ {\color{black}(note that they are also supported in $|x-t|\leq R$). Notice that this result suggest non existence of solitary waves in the considered integration region.}
\end{proof}

\section{Exterior stability bounds}\label{sec:2}

 Recall from \eqref{S:energy} and \eqref{S:momentum} the energy and momentum
\[
E[\Lambda, \Lambda_t,\phi, \phi_t](t)  = \int e(t,x)dx, \quad P[\Lambda, \Lambda_t,\phi, \phi_t](t)  = \int p(t,x)dx,
\]
where $e$ and $p$ are the energy and momentum densities:
 \begin{equation}\label{S:energy_density}
e: =\dfrac{1}{2}((\partial_{x}\Lambda)^{2}+(\partial_{t}\Lambda)^{2})+2\sinh^{2}(\Lambda)((\partial_{x}\phi)^{2}+(\partial_{t}\phi)^{2}),
	\end{equation}
\begin{equation}\label{S:momentum_density}
 p: = \partial_x\Lambda\partial_t\Lambda + 4\sinh^{2}(\Lambda)\partial_x\phi\partial_t\phi .
\end{equation} 
The  system \eqref{S:PCF} can be expanded around the perturbation of the soliton ${\bf B + u}$,  as follows: for $\mathbf{B}:= [B,\partial_t B,D,\partial_t D]$ and for any $\mathbf{u}=[z,w,s,m]$, we have that $\mathbf{u}$ formally satisfies ($\Box z =z_{tt}-z_{xx}$)
\begin{equation}
\begin{cases}\label{SistemaP_0}
\square z =&-2\sinh(2(B+z)) \left(2D_xs_x-2D_ts_t+s_x^2-s_t^2 \right)\\
&-2\left(2\sinh(2B)\sinh^2(z)+\sinh(2z)\cosh(2B)\right)\left(D_x^2-D_t^2 \right),\\
\square s=&-\dfrac{\sinh(2(B+z))}{\sinh^2(B+z)} \left( D_tz_t -D_xz_x+B_ts_t-B_xs_x+s_t z_t -s_x z_x \right)\\
& +\left(\frac{\sinh(2B)}{\sinh^2(B)}-\frac{\sinh(2(B+z))}{\sinh^2(B+z)} \right)\left( D_tB_t-D_xB_x \right).
\end{cases}
\end{equation}
This system can be written in matrix form as
\begin{equation}
\begin{cases}\label{SistemaP}
z_t=w,   & s_t=m,\\
w_t-z_{xx}=& -2\sinh(2(B+z)) \left(2D_xs_x-2D_tm+s_x^2-m^2 \right)\\
&-2\left(2\sinh(2B)\sinh^2(z)+\sinh(2z)\cosh(2B)\right)\left(D_x^2-D_t^2 \right),\\
m_t-s_{xx}=& -\dfrac{\sinh(2(B+z))}{\sinh^2(B+z)} \left( D_tw -D_xz_x+B_tm-B_xs_x+wm -s_x z_x \right)\\
& +\left(\frac{\sinh(2B)}{\sinh^2(B)}-\frac{\sinh(2(B+z))}{\sinh^2(B+z)} \right)\left( D_tB_t-D_xB_x \right).
\end{cases}
\end{equation}
Recalling  the conservation laws of the model  \eqref{S:energy} and \eqref{S:momentum}, and considering the expansion of the energy and momentum densities around the 1-soliton, we get

\begin{lem}
Let $(t,x)$ be such that $\mathbf{u}(t,x)$ is well-defined. Then a.e.
\begin{equation}
\begin{aligned}\label{expansionP}
2(e-p)[\mathbf{B}+\mathbf{u}]=&(B_x-B_t)^2+4\sinh^2(B+z)(D_x-D_t)^2\\
&+2(B_x-B_t)(z_x-w)+8\sinh^2(B+z)(D_x-D_t)(s_x-m)\\
 &+(z_x-w)^2+ 4\sinh^2(B+z)(s_x-m)^2 \geq 0,
\end{aligned}
\end{equation}
where densities $e$ and $p$ are given in \eqref{S:energy_density} and \eqref{S:momentum_density}. 
%Moreover, $(e-p)[\mathbf{B}+\mathbf{u}]$ is almost everywhere in time a constant quantity.
\end{lem}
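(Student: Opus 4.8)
The plan is a direct algebraic computation that exploits the null structure hidden in the combination $e-p$. Before decomposing into soliton plus perturbation, I would work at the level of the full fields $\Lambda$ and $\phi$. From the definitions \eqref{S:energy_density} and \eqref{S:momentum_density},
\[
2(e-p) = (\partial_x\Lambda)^2 + (\partial_t\Lambda)^2 - 2\,\partial_x\Lambda\,\partial_t\Lambda + 4\sinh^2(\Lambda)\left[(\partial_x\phi)^2 + (\partial_t\phi)^2 - 2\,\partial_x\phi\,\partial_t\phi\right],
\]
where the coefficients $4$ and $8=2\cdot 4$ in front of the $\phi$-terms come from the factors $2\sinh^2$ and $4\sinh^2$ in $e$ and $p$ respectively. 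Completing the square inside each bracket collapses this to
\[
2(e-p) = (\partial_x\Lambda - \partial_t\Lambda)^2 + 4\sinh^2(\Lambda)\,(\partial_x\phi - \partial_t\phi)^2 = \abs{\underline{L}\Lambda}^2 + 4\sinh^2(\Lambda)\,\abs{\underline{L}\phi}^2,
\]
recalling $\underline{L}=\partial_t-\partial_x$. Since both summands are squares multiplied by a nonnegative coefficient, the nonnegativity asserted in \eqref{expansionP} is immediate, with no sign analysis or cancellation required.

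Next I would insert the soliton decomposition \eqref{deco}, namely $\Lambda = B+z$ and $\phi = D+s$, so that $\partial_t\Lambda = B_t + w$ and $\partial_t\phi = D_t + m$ (using $z_t=w$ and $s_t=m$ from \eqref{SistemaP}), and consequently $\partial_x\Lambda - \partial_t\Lambda = (B_x-B_t)+(z_x-w)$ and $\partial_x\phi - \partial_t\phi = (D_x-D_t)+(s_x-m)$, while the weight becomes $\sinh^2(B+z)$. Expanding the two binomial squares produces exactly the three groups of terms displayed in \eqref{expansionP}: the pure-soliton part $(B_x-B_t)^2 + 4\sinh^2(B+z)(D_x-D_t)^2$, the cross terms $2(B_x-B_t)(z_x-w) + 8\sinh^2(B+z)(D_x-D_t)(s_x-m)$, and the pure-perturbation part $(z_x-w)^2 + 4\sinh^2(B+z)(s_x-m)^2$.

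As for the ``a.e.'' and well-definedness caveats, the only delicate factor is the weight $\sinh^2(B+z)$, but Corollary \ref{condicionlambda_z} guarantees it is bounded above and below by positive constants wherever $\mathbf{u}$ is defined, so every term is finite and the manipulation is valid pointwise. I expect no genuine obstacle: the whole identity reduces to a single completion of squares dictated by the null vector field $\underline{L}$, and the only care needed is the bookkeeping of the cross terms and confirming the coefficients $4$ and $8$ on the $\phi$-contributions.
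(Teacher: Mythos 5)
Your proof is correct and is essentially the paper's argument: a direct algebraic expansion of $e-p$ under the substitution $\Lambda=B+z$, $\phi=D+s$ (the paper expands $e[\mathbf{B}+\mathbf{u}]$ and $p[\mathbf{B}+\mathbf{u}]$ separately and then subtracts, whereas you complete the square $2(e-p)=(\partial_x\Lambda-\partial_t\Lambda)^2+4\sinh^2(\Lambda)(\partial_x\phi-\partial_t\phi)^2$ at the level of the full fields first, which makes the nonnegativity immediate). As a minor bonus, your computation also makes explicit that the term written as $4\sinh^2(D_x-D_t)^2$ in the statement should carry the argument $\sinh^2(B+z)$.
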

\begin{proof}
First of all, notice that
\begin{equation}\label{expansion_e}
\begin{aligned}
e[\mathbf{B}+\mathbf{u}] = &~{}  \frac{1}{2}B_x^2+ B_x z_x + B_t w+ \frac{1}{2}B_t^2 + {\color{black}2}\sinh^2(B+z) \left( D_x^2+2D_xs_x+2D_tm +D_t^2\right) \\
&~{} + \frac{1}{2}(z_x^2+  w^2) + 2 \sinh^2(B+z) \left( s_x^2+m^2 \right).
\end{aligned}
\end{equation}
On the other hand,
\begin{equation}\label{expansion_p}
\begin{aligned}
p[\mathbf{B}+\mathbf{u}] =&B_tB_x+B_t z_x +B_x w    + 4 \sinh^2(B+z) \left( D_xD_t+D_xm+D_ts_x \right)\\
&+ z_xw +{\color{black}4}\sinh^2(B+z)\left( s_x m\right).
\end{aligned}
\end{equation}
Subtracting both identities we get \eqref{expansionP}:
\begin{equation*}
\begin{aligned}
2(e-p)[\mathbf{B}+\mathbf{u}]=&(B_x-B_t)^2+4\sinh^2{\color{black}(B+z)}(D_x-D_t)^2\\
&+2(B_x-B_t)(z_x-w)+8\sinh^2(B+z)(D_x-D_t)(s_x-m)\\
 &+(z_x-w)^2+ 4\sinh^2(B+z)(s_x-m)^2.
\end{aligned}
\end{equation*}
The proof is complete.
\end{proof}

Now we are ready to prove \eqref{diferencia}.
\begin{cor}
For all $t\geq 0$,
\[
\begin{aligned}
& \int_{\mathbb R} \left( (z_x-w)^2+ 4\sinh^2(B+z)(s_x-m)^2 \right) (t,x)dx  \\
& = \int_{\mathbb R} \left( (z_{0,x} -w_0)^2+ 4\sinh^2(B+z_0)(s_{0,x}-m_0)^2 \right) (x)dx<3\delta.
\end{aligned}
\]
\end{cor}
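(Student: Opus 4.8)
The plan is to show that the integrand appearing in \eqref{diferencia} is \emph{exactly} the null energy--momentum density $2(e-p)$ of the full solution, and then to read off the conclusion from conservation of $E-P$. First I would record that, under the setting \eqref{initial_data} and Theorem \ref{S:GLOBAL0}, the perturbed field $(\Lambda,\phi)=(B+z,D+s)$ is a global finite-energy solution of the PCF system, so that both its energy \eqref{S:energy} and its momentum \eqref{S:momentum} are conserved. Consequently $\int_{\mathbb R}(e-p)[\mathbf B+\mathbf u](t,x)\,dx=E-P$ is independent of $t$.

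The decisive observation is structural: the nonsingular soliton is a left-traveling wave, i.e. $B$ and $D$ depend on $(t,x)$ only through $x+t$. This is precisely the property behind the localization $\supp(\partial_t B,\partial_x B,\partial_t D,\partial_x D)\subset\{|x+t|\le R\}$ already exploited in the local-decay corollary, and it is visible in \eqref{pt_B}--\eqref{pt_D}: one has $\partial_x B=\partial_t B$ and $\partial_x D=\partial_t D$, so that $B_x-B_t=0$ and $D_x-D_t=0$ identically. Substituting these two identities into the expansion \eqref{expansionP} annihilates its first two lines, since the pure-soliton terms and the crossed soliton--perturbation terms each carry a factor $B_x-B_t$ or $D_x-D_t$. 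What survives is, pointwise almost everywhere, $2(e-p)[\mathbf B+\mathbf u]=(z_x-w)^2+4\sinh^2(B+z)(s_x-m)^2$.

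Combining the two previous steps yields the first, conservation, equality of the statement: the integral of $(z_x-w)^2+4\sinh^2(B+z)(s_x-m)^2$ over $\Sigma_t$ equals $2(E-P)$ for every $t\ge 0$, hence equals its value at $t=0$. It then remains to bound that $t=0$ value by the assumed external energy. Expanding the squares gives $(z_{0,x}-w_0)^2=z_{0,x}^2+w_0^2-2z_{0,x}w_0$ and $4\sinh^2(B+z_0)(s_{0,x}-m_0)^2=4\sinh^2(B+z_0)(s_{0,x}^2+m_0^2)-8\sinh^2(B+z_0)s_{0,x}m_0$; controlling the two crossed contributions with Young's inequality $2|ab|\le a^2+b^2$ shows that the $t=0$ integral is dominated by a universal multiple of $\int_{\mathbb R}\bigl(\tfrac12(w_0^2+z_{0,x}^2)+2\sinh^2(B+z_0)(s_{0,x}^2+m_0^2)\bigr)dx<\delta$. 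This delivers the asserted smallness, and in particular \eqref{diferencia}.

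I expect the only real obstacle to be justifying the structural identities $B_x-B_t=0=D_x-D_t$ cleanly from the explicit profile \eqref{1Soliton}, since it is exactly these identities that collapse the full expansion \eqref{expansionP} onto the pure perturbation density; once they are in hand, the conservation of $E-P$ and the elementary $t=0$ estimate are routine.
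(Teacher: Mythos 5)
Your route is the one the paper (tacitly) follows: the corollary is stated immediately after the expansion \eqref{expansionP} and is meant to be read off from conservation of $E-P$, so identifying the integrand with $2(e-p)[\mathbf B+\mathbf u]$ and then estimating the $t=0$ value by Young's inequality is exactly the intended argument. The problem is the step you yourself flagged as "the only real obstacle": the structural identities $B_x-B_t\equiv 0$ and $D_x-D_t\equiv 0$ are \emph{false} for the profile as defined in the paper. The seed \eqref{BackSeed} is $\Lambda^{(0)}_\varepsilon=\lambda+\varepsilon(\theta(x+t)+\sigma(x-t))$, i.e.\ it contains a genuinely right-moving component, and \eqref{pt_B}--\eqref{pt_D} give explicitly
\[
B_x-B_t=\frac{2\varepsilon\,\sigma'(x-t)f_2}{f_3},\qquad D_x-D_t=\frac{-2\varepsilon\,\sigma'(x-t)g_2}{g_3},
\]
which vanish identically only when $\sigma'\equiv 0$. (For the same reason the derivatives of $B,D$ are supported in $\{|x+t|\le R\}\cup\{|x-t|\le R\}$, not merely in $\{|x+t|\le R\}$.) Consequently the first two lines of \eqref{expansionP} do not collapse: conservation of $E-P$ only tells you that
\[
\int_{\mathbb R}\Bigl[\bigl((B_x-B_t)+(z_x-w)\bigr)^2+4\sinh^2(B+z)\bigl((D_x-D_t)+(s_x-m)\bigr)^2\Bigr]dx
\]
is constant in $t$, and the background and crossed terms are neither zero nor separately conserved (the crossed terms even depend on $z,s,w,m$). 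So the exact equality in the statement cannot be obtained this way unless one restricts to $\sigma\equiv 0$; in the general case one only gets the perturbation integral up to $O(\varepsilon)$ corrections. To be fair, this is as much a gap in the corollary as stated as in your proof of it --- but since you asserted the left-traveling-wave property as a fact rather than proving it, the gap sits squarely in your argument.

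Two smaller points. First, your Young's-inequality step at $t=0$ gives $\int\bigl[(z_{0,x}-w_0)^2+4\sinh^2(B+z_0)(s_{0,x}-m_0)^2\bigr]dx\le 4\int\bigl[\tfrac12(w_0^2+z_{0,x}^2)+2\sinh^2(B+z_0)(s_{0,x}^2+m_0^2)\bigr]dx<4\delta$, not $3\delta$; with the normalization of \eqref{diferencia} (coefficients $\tfrac12$ and $2$) one gets $<2\delta$, which does imply \eqref{diferencia}, so the conclusion you actually need survives, but you should not claim the constant $3$ without tracking it. Second, the phrase "a universal multiple of" is doing real work there and should be replaced by the explicit factor, since the whole point of the corollary is a quantitative bound.
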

The previous result establishes that the differences $z_x-w$ and $s_x-m$ have good behavior in time, and remain bounded. An important part of the proof of Theorem \ref{MT1} will be to get better control on each part of the perturbation ${\bf u}$ separately.  Let us calculate the variation with respect to $t$ of the quantity \eqref{expansionP}, using the system \eqref{SistemaP}.
Let 
\begin{equation}\label{VRF}
\begin{aligned}
\hat e[\mathbf u](t):=&~{} \frac{1}{2} (w^2+ z_x^2) +2\sinh^2(B+z) ( s_x^2+m^2) ,\\
\hat p[\mathbf u](t):=&~{}z_x w +4\sinh^2(B+z) s_xm ,\\
F_p[ \mathbf u](t):=&~{} z_x(B_{tt}-B_{xx})+4\sinh^2(B+z)s_x(D_{xx}-D_{tt})\\
&~{}+2\sinh(2(B+z))(z_x(D_t^2-D_x^2+2s_tD_t)+2s_x(B_xD_x-B_tD_x-B_tD_t-z_tD_t))\\
&~{} +2\sinh(2(B+z))(s_x^2-s_t^2)B_x,\\
F_e[ \mathbf u](t):=&~{} z_t(B_{xx}-B_{tt})+4\sinh^2(B+z)s_t(D_{xx}-D_{tt})\\
&~{}+2\sinh(2(B+z)) \left(s_t(2B_xD_x-2D_tB_t+2z_xD_x)+z_t(D_t^2-D_x^2-2s_xD_x)\right) . \end{aligned}
\end{equation}
In this case, $\hat e$ and $\hat p$ are localized versions of $e$ and $p$ \eqref{S:energy_density}-\eqref{S:momentum_density}. Notice that $|\hat p[\mathbf u](t)| \leq |\hat e[\mathbf u](t)|$. Then, we claim the following
\begin{lem}\label{VR}  
For all $t\geq 0$, it holds that
\begin{equation}\label{2nd_order}
\partial_t \hat e[\mathbf u](t)= \partial_x\hat p[\mathbf u](t) +F_e [\mathbf u](t), \quad \partial_t \hat p[\mathbf u](t)= \partial_x\hat e[\mathbf u](t) +F_p [\mathbf u](t).
\end{equation}
\end{lem}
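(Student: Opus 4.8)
The statement is a pair of differential identities asserting that the localized energy density $\hat e$ and momentum density $\hat p$ satisfy conservation-type relations with explicit source terms $F_e$, $F_p$. The plan is to prove both identities by direct computation: differentiate $\hat e$ and $\hat p$ in $t$, use the equations of motion \eqref{SistemaP} to replace the second-order terms $w_t - z_{xx}$ and $m_t - s_{xx}$, and then organize the result into a total $x$-derivative plus the claimed source. Concretely, for the first identity I would expand
\[
\partial_t \hat e[\mathbf u] = w\,w_t + z_x z_{xt} + 4\sinh(2(B+z))(\partial_t(B+z))(s_x^2+m^2) + 4\sinh^2(B+z)(s_x s_{xt} + m\, m_t),
\]
using $\partial_x(2\sinh^2 \xi) = 2\sinh(2\xi)\,\xi_x$ and $z_t=w$, $s_t=m$ from \eqref{SistemaP}. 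The analogous expansion for $\partial_t \hat p$ starts from $\hat p = z_x w + 4\sinh^2(B+z)s_x m$.

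The key mechanism is that the ``good'' part of $\partial_t \hat e$ reassembles into $\partial_x \hat p$: the wave-operator terms combine as $w(w_t) + z_x z_{xt}$ versus $\partial_x(z_x w) = z_{xx}w + z_x w_x$, so that after substituting $w_t = z_{xx} + (\text{RHS of }\square z)$ the purely quadratic kinetic pieces telescope, leaving exactly the nonlinear right-hand sides of \eqref{SistemaP} paired against $w$ and $m$. The same cancellation pattern, with the roles of $\hat e$ and $\hat p$ swapped, yields the second identity. The cross terms generated by differentiating the weight $\sinh^2(B+z)$ (which depend on $B_t, B_x$ and on $z_t = w$) are precisely what produce the $\sinh(2(B+z))$ contributions and the soliton-derivative terms $B_{tt}-B_{xx}$, $D_{xx}-D_{tt}$ appearing in \eqref{VRF}. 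I would keep careful track of these so that every term produced by the substitution lands either in $\partial_x \hat p$ (respectively $\partial_x \hat e$) or in $F_e$ (respectively $F_p$), with no leftover.

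The main obstacle is purely bookkeeping rather than conceptual: the right-hand sides of \eqref{SistemaP} contain many terms — the $\sinh(2(B+z))$ factor multiplying $(2D_x s_x - 2D_t m + s_x^2 - m^2)$, the $(2\sinh(2B)\sinh^2 z + \sinh(2z)\cosh(2B))(D_x^2 - D_t^2)$ term, and the delicate difference $\big(\tfrac{\sinh(2B)}{\sinh^2 B} - \tfrac{\sinh(2(B+z))}{\sinh^2(B+z)}\big)(D_t B_t - D_x B_x)$ — so matching them against the prescribed $F_e, F_p$ requires attentive use of trigonometric identities (for instance rewriting $\sinh(2(B+z))$ and grouping the $\sinh^2$ weights). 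The factor-of-$2$ and sign conventions between $2\sinh^2$ in $\hat e$ and $4\sinh^2$ in $\hat p$ must be tracked consistently. I would verify the identities hold pointwise wherever $\mathbf u$ is smooth (guaranteed for our data by Theorem \ref{S:GLOBAL0}), so no distributional subtlety arises, and conclude by collecting terms to exhibit the two equalities in \eqref{2nd_order} exactly as stated.
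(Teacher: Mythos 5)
Your proposal is correct in substance but organizes the computation differently from the paper. You differentiate $\hat e$ and $\hat p$ directly in $t$ and substitute the perturbation equations \eqref{SistemaP} for $w_t$ and $m_t$; the paper instead starts from the exact conservation laws $\partial_t e=\partial_x p$ and $\partial_t p=\partial_x e$ for the full densities (quoted from \cite[Lemma 4.2]{JT}) applied to the solution $\mathbf{B}+\mathbf{u}$, writes $\hat e = e[\mathbf B+\mathbf u]-R_e$ and $\hat p = p[\mathbf B+\mathbf u]-R_p$ with $R_e,R_p$ collecting the pure-background and cross terms, and then identifies $F_e=\partial_x R_p-\partial_t R_e$ (and symmetrically for $F_p$) using \eqref{SistemaP_0}. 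The paper's route lets the known conservation law absorb the cancellation of the purely quadratic kinetic pieces automatically, at the price of expanding the full densities; your route is self-contained but must reproduce that cancellation by hand. Two bookkeeping points to watch if you carry this out: (i) in your displayed expansion the weight derivative should read $\partial_t\bigl(2\sinh^2(B+z)\bigr)=2\sinh(2(B+z))\,\partial_t(B+z)$, not $4\sinh(2(B+z))\,\partial_t(B+z)$; and (ii) the source terms as printed in \eqref{VRF} contain $B_{xx}-B_{tt}$ and $D_{xx}-D_{tt}$, which your substitution of \eqref{SistemaP} alone will not produce --- to land on that exact form you must also use the background equations satisfied by $(B,D)$, or equivalently the splitting $2\sinh(2B)\sinh^2 z+\sinh(2z)\cosh(2B)=\sinh(2(B+z))-\sinh(2B)$ and its analogue in the $s$-equation. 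With those adjustments the two computations agree.
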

\begin{proof} 
From \cite[Lemma 4.2]{JT}, we know that $\partial_t e = \partial_x p$ and $\partial_t p = \partial_x e$, with $e$ and $p$ given in \eqref{S:energy_density}-\eqref{S:momentum_density}.
We have from \eqref{expansion_e}-\eqref{expansion_p},
\[
\begin{aligned}
 \partial_t \hat e[\mathbf u](t) = &~{}  -\partial_t \left(\frac12 (B_x^2{\color{black}+}B_t^2)+B_xz_x+B_tw+{\color{black}2}\sinh^2(B+z)(D_x^2+2D_xs_x+D_t^2+2D_tm) \right) \\
 &~{} + \partial_x\left( B_xB_t+B_xw+z_xB_t+4\sinh^2(B+z)(D_xD_t+D_xm+s_xD_t) + \hat p[\mathbf u] \right).
\end{aligned}
\]
Using \eqref{SistemaP_0},
\[
\begin{aligned}
&\partial_t \left(\frac12 (B_x^2{\color{black}+}B_t^2)+B_xz_x+B_tw+{\color{black}2}\sinh^2(B+z)(D_x^2+2D_xs_x+D_t^2+2D_tm) \right) \\
& = ~{}B_xB_{xt}+B_tB_{tt}+z_xB_{xt}+z_{xt}B_x+z_tB_{tt}\\
&\quad ~{} +B_t\left(z_{xx}+B_{xx}-B_{tt}-2\sinh(2(B+z))(D_x^2+2D_xs_x+s_x^2-D_t^2-s_tD_t-s_t^2) \right)\\
& \quad ~{}+ 4\sinh^2(B+z)D_t(s_{xx}+D_{xx}-D_{tt})\\
& \quad ~{}  -4\sinh(2(B+z))(D_tB_t+D_tz_t+s_tB_t+s_tz_t-D_xB_x-D_xz_x-s_xB_x-s_xz_x). 
\end{aligned}
\]
Simplifying,
\[
\begin{aligned}
&\partial_x \left( B_xB_t+B_xw+z_xB_t+4\sinh^2(B+z)(D_xD_t+D_xm+s_xD_t)  \right) \\
& = ~{} \partial_x \left( B_xB_t+B_xw+z_xB_t \right) \\
& \quad ~{} +4    \sinh(2(B+z))(B_x+z_x) \left( D_xD_t+D_xm+s_xD_t\right)\\
& \quad ~{} +4\sinh^2(B+z)(D_xxD_t+D_xD_{tx}+D_xxs_t+D_xs_{tx}+s_{xx}D_t+s_xD_{tx}).
\end{aligned}
\]
We conclude now that
\[
\begin{aligned}
 \frac{d}{dt} \hat e[\mathbf u](t) = &~{} \partial_x \hat p[\mathbf u] (t) + F_e[\mathbf u](t).
\end{aligned}
\]
This is the first identity in \eqref{2nd_order}. On the other hand, using  \eqref{expansion_e}-\eqref{expansion_p} again,
\begin{equation*}
\begin{aligned}
\partial_t \hat p[\mathbf u]=& -\partial_t\left(B_xB_t+B_xz_t+B_tz_x+4\sinh^2(B+z)(D_xD_t+D_xm+s_xD_t) \right)\\
&+ \partial_x\left(\frac12(B_x^2+B_t^2)+B_xz_x+B_tw+{\color{black}2}\sinh^2(B+z)(D_x^2+D_t^2+2D_xs_x+2D_tm) \right)\\
& +\partial_x \hat e[\mathbf u].
\end{aligned}
\end{equation*}
Rearranging appropriately, using \eqref{SistemaP_0} and simplifying, finally we obtain the second identity in \eqref{2nd_order}% we can partially write the term as a total derivative 
\begin{equation*}
\begin{aligned}
& \frac{d}{dt}  \hat p[\mathbf u] (t)  =  \partial_x \hat e[\mathbf u] (t)  +F_p[\mathbf u].
\end{aligned}
\end{equation*}
\end{proof}
The Lemma \ref{VR}, allows us to propose the following integral estimation, which will be the focus of the subsequent section:
\begin{lem}\label{VR2}
Let $\chi_1,\chi_2$  smooth bounded weight functions and $r\in \mathbb R$. Then
\begin{equation}\label{VR2_1}
\begin{aligned}
\frac{d}{dt}\int \hat e[\mathbf u](t) \chi_1(x + rt )  \chi_2(x-rt)dx =  & ~{} r \int \hat e[\mathbf u](t)(\chi_1'(x+rt)\chi_2(x-rt)-\chi_1(x+t)\chi'_2(x-rt))dx \\
 &~{}-\int \hat p [\mathbf u](t)(\chi_1'(x+rt)\chi_2(x-rt)+\chi_1(x+t)\chi'_2(x-rt))dx\\
&~{} + \int F_e[\mathbf u](t) \chi_1(x+rt)\chi_2(x-rt) dx,\\
\frac{d}{dt}\int \hat p[\mathbf u](t)\chi_1(x+ rt )\chi_2(x-rt)dx  =&  ~{} r \int \hat p[\mathbf u](t)(\chi_1'(x+rt)\chi_2(x-rt){\color{black}-}\chi_1(x+t)\chi'_2(x-rt))dx\\
&~{}-\int \hat e[\mathbf u](t)(\chi_1'(x+rt)\chi_2(x-rt){\color{black}+}\chi_1(x+t)\chi'_2(x-rt))dx\\
&~{}  + \int F_p[\mathbf u](t) \chi_1(x+rt)\chi_2(x-rt) dx.
\end{aligned}
\end{equation}
\end{lem}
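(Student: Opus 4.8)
The plan is to derive both identities in \eqref{VR2_1} directly from the pointwise conservation laws of Lemma \ref{VR}, namely \eqref{2nd_order}, combined with a single integration by parts in $x$ and the chain rule in $t$; no new structural input beyond Lemma \ref{VR} should be needed. First I would differentiate under the integral sign and apply the product rule, writing
\[
\frac{d}{dt}\int \hat e[\mathbf u]\,\chi_1(x+rt)\chi_2(x-rt)\,dx = \int \partial_t\hat e[\mathbf u]\,\chi_1\chi_2\,dx + \int \hat e[\mathbf u]\,\partial_t\big(\chi_1(x+rt)\chi_2(x-rt)\big)\,dx,
\]
and likewise with $\hat e$ replaced by $\hat p$ in the second identity. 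Differentiation under the integral is legitimate because the perturbations are smooth and the hypotheses on the initial data (compact support in item (1), weighted integrability \eqref{ICs} in item (2)) guarantee that $\hat e[\mathbf u]$, $\hat p[\mathbf u]$ and their relevant derivatives are integrable and decay at spatial infinity.

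For the first integral I substitute the local laws \eqref{2nd_order}: in the energy case $\partial_t\hat e = \partial_x\hat p + F_e$, and in the momentum case $\partial_t\hat p = \partial_x\hat e + F_p$. This immediately produces the terms $\int F_e\,\chi_1\chi_2\,dx$ and $\int F_p\,\chi_1\chi_2\,dx$ on the right-hand side, while the remaining piece $\int \partial_x\hat p\,\chi_1\chi_2\,dx$ (resp.\ $\int \partial_x\hat e\,\chi_1\chi_2\,dx$) is handled by integrating by parts in $x$. Using
\[
\partial_x\big(\chi_1(x+rt)\chi_2(x-rt)\big)=\chi_1'(x+rt)\chi_2(x-rt)+\chi_1(x+rt)\chi_2'(x-rt),
\]
and noting that the boundary terms at $x=\pm\infty$ vanish by the decay invoked above, the derivative is transferred onto the weight, yielding $-\int \hat p\,(\chi_1'\chi_2+\chi_1\chi_2')\,dx$ in the energy identity and $-\int \hat e\,(\chi_1'\chi_2+\chi_1\chi_2')\,dx$ in the momentum identity.

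For the second integral I compute the time derivative of the weight directly by the chain rule,
\[
\partial_t\big(\chi_1(x+rt)\chi_2(x-rt)\big)=r\big(\chi_1'(x+rt)\chi_2(x-rt)-\chi_1(x+rt)\chi_2'(x-rt)\big),
\]
which contributes $r\int \hat e\,(\chi_1'\chi_2-\chi_1\chi_2')\,dx$ and $r\int \hat p\,(\chi_1'\chi_2-\chi_1\chi_2')\,dx$ respectively. Collecting the three contributions in each case reproduces \eqref{VR2_1} (the arguments $x+rt$, $x-rt$ being understood throughout). I expect no genuine obstacle here: the computation is routine once \eqref{2nd_order} is in hand, and the only point demanding care is the justification of differentiating under the integral and the vanishing of the boundary terms in the integration by parts, both of which follow from the smoothness and spatial localization of the perturbations supplied by the hypotheses on the initial data.
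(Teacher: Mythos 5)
Your proof is correct and is exactly the argument the paper intends (its own proof is the one-line remark that the lemma is ``direct from Lemma \ref{VR} and integration by parts''): differentiate under the integral, insert the pointwise laws $\partial_t\hat e=\partial_x\hat p+F_e$ and $\partial_t\hat p=\partial_x\hat e+F_p$, integrate by parts in $x$, and apply the chain rule to the weight. Note only that your (correct) computation yields $r\int\hat p(\chi_1'\chi_2-\chi_1\chi_2')-\int\hat e(\chi_1'\chi_2+\chi_1\chi_2')$ in the second identity, whereas the printed statement has these two signs swapped (and writes $\chi_1(x+t)$ for $\chi_1(x+rt)$ in places) --- typos in the paper's statement, not gaps in your argument.
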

\begin{proof}The proof of the Lemma \ref{VR2} is direct from the Lemma \ref{VR} and  integration by parts. 
\end{proof} 

Note that from \eqref{pt_B}-\eqref{pt_D}, one has that $\partial_tB, \partial_t D$ are compactly supported functions. Assume that at time $t=0$ one has $\spp(\theta')\cup \spp(\sigma')\subseteq \{\xi \in \mathbb{R}: |\xi| < R \}$, for some $R>0$.

\begin{lem}[Exterior stability]
Assume that ${\bf B} + {\bf u}$ is globally defined. Then for any $0<\delta <R$ one has
\[
\int_{  |x+t| \geq R} \hat e[\mathbf u](t) dx \leq \int_{ |x| \geq R} \hat e[\mathbf u](0) dx.
\]
\end{lem}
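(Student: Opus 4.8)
The plan is to run a finite-speed-of-propagation argument based on the local energy balance of Lemma \ref{VR}, exploiting that the forcing is switched off in the exterior region. First I would record from \eqref{2nd_order} the differential identity $\partial_t \hat e[\mathbf u]=\partial_x \hat p[\mathbf u]+F_e[\mathbf u]$ and inspect the support of $F_e$. Looking at \eqref{VRF}, every summand of $F_e[\mathbf u]$ carries at least one factor among $B_t,B_x,D_t,D_x$ or their second derivatives; by \eqref{pt_B}--\eqref{pt_D} these are all proportional to $\theta'(x+t)$ or $\sigma'(x-t)$ and their derivatives, hence supported in $\{|x+t|\le R\}$. Consequently $F_e[\mathbf u](\tau,y)=0$ whenever $|y+\tau|>R$, so on the exterior region the balance law degenerates to the exact conservation law $\partial_\tau \hat e=\partial_y \hat p$.

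Next I would integrate this conservation law over the truncated spacetime slab
\[
\Omega_t:=\{(\tau,y):\ 0\le \tau\le t,\ |y+\tau|\ge R\},
\]
whose lateral boundary consists of the two null segments $\{y+\tau=R\}$ and $\{y+\tau=-R\}$, $0\le\tau\le t$. Applying the divergence theorem to the spacetime field $(\hat e,-\hat p)$, whose spacetime divergence is $\partial_\tau\hat e-\partial_y\hat p=0$ on $\Omega_t$, the top slice contributes $\int_{|y+t|\ge R}\hat e[\mathbf u](t)\,dy$, the bottom slice contributes $-\int_{|y|\ge R}\hat e[\mathbf u](0)\,dy$, and each null segment contributes a flux proportional to $(\hat e-\hat p)$ evaluated along it. The structural input I would invoke is the pointwise coercivity already noted before Lemma \ref{VR}: rewriting the densities gives
\[
\hat e-\hat p=\tfrac12(z_x-w)^2+2\sinh^2(B+z)(s_x-m)^2\ge 0,\qquad \hat e+\hat p\ge 0,
\]
so that $|\hat p|\le \hat e$. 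With the outward normals of $\Omega_t$ along the two characteristics, the lateral flux drains energy from the exterior, which delivers exactly
\[
\int_{|y+t|\ge R}\hat e[\mathbf u](t)\,dy\le \int_{|y|\ge R}\hat e[\mathbf u](0)\,dy.
\]

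The main obstacle is the bookkeeping of the orientation and sign of the two characteristic fluxes: one must verify that, with the outward normals of $\Omega_t$ along $y+\tau=\pm R$, the combination appearing on each segment is precisely $\hat e-\hat p$ rather than $\hat e+\hat p$, and that the soliton's leftward structure places the nontrivial boundary contribution on the side where the coercive sign of $\hat e-\hat p$ makes the net lateral term nonpositive. A secondary technical point is justifying the divergence theorem for merely energy-space data: since $\hat e,\hat p$ are only $L^1$ in $y$ with $L^1_{\mathrm{loc}}$ time derivatives, I would first establish the identity for the smooth global solutions furnished by Theorem \ref{S:GLOBAL0} under data \eqref{initial_data}, using the uniform bounds of Corollary \ref{condicionlambda_z} to control the $\sinh^2(B+z)$ weight, and then pass to the limit. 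Finiteness of the energy guarantees that $\hat e,\hat p$ decay as $|y|\to\infty$, so the boundary terms at spatial infinity vanish and the integrations by parts are legitimate.
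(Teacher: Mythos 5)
Your overall strategy --- the local balance law $\partial_t\hat e=\partial_x\hat p+F_e$ from Lemma \ref{VR}, the support properties of the soliton derivatives to kill the forcing in the exterior, and the pointwise coercivity $|\hat p|\le\hat e$ to control the characteristic fluxes --- is the same as the paper's (the paper implements it with smooth monotone cutoffs and the virial identity of Lemma \ref{VR2} rather than a sharp divergence theorem, which is only a cosmetic difference). The genuine problem is the region you integrate over. On the component $\{y+\tau\ge R\}$ of your slab $\Omega_t$, the lateral boundary $y+\tau=R$ is a left-moving characteristic whose outward normal is $-\tfrac{1}{\sqrt2}(1,1)$, and the divergence theorem gives
\[
\int_{\{y+t\ge R\}}\hat e[\mathbf u](t)\,dy=\int_{\{y\ge R\}}\hat e[\mathbf u](0)\,dy+\tfrac{1}{\sqrt2}\int_{\{y+\tau=R\}}(\hat e-\hat p)\,d\ell ,
\]
with a \emph{nonnegative} correction: right-moving components of the perturbation cross that null line \emph{into} the region, so the energy over $\{|y+\tau|\ge R\}$ can strictly increase (already for the free wave: a right-moving bump initially supported in $|x|<R$ lies entirely inside $\{|x+t|\ge R\}$ once $t$ is large). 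The coercivity $\hat e-\hat p\ge0$ works against you on that side, and the appeal to ``the soliton's leftward structure'' cannot rescue it, because the flux density $\tfrac12(z_x-w)^2+2\sinh^2(B+z)(s_x-m)^2$ is built from the perturbation, which is arbitrary. A second, independent gap: $F_e$ does \emph{not} vanish on $\{|y+\tau|>R\}$, since by \eqref{pt_B}--\eqref{pt_D} the soliton derivatives carry $\sigma'(x-t)$ factors supported in $\{|x-t|\le R\}$, a set that meets $\{|x+t|>R\}$ (for instance along $x=t$ with $t$ large).

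Both defects disappear if you replace $\{|y+\tau|\ge R\}$ by the exterior of the forward light cone, $\{y+\tau\le-R\}\cup\{y-\tau\ge R\}$, which is the region the paper's argument actually treats (its two cutoffs are $\chi_1(x+t)$ supported in $x+t\le-R$ and $\chi_2(x-t)$ supported in $x-t\ge R$, handled one side at a time). There the two lateral boundaries belong to opposite characteristic families, the outgoing fluxes are $\hat e-\hat p\ge0$ across $y+\tau=-R$ and $\hat e+\hat p\ge0$ across $y-\tau=R$, both draining energy from the exterior, and on that region both $\theta'(x+t)$ and $\sigma'(x-t)$ vanish, so $F_e\equiv0$ there. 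With that substitution your divergence-theorem argument goes through and reproduces the paper's conclusion; as written, it does not establish the stated inequality.
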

This bound proves \eqref{exterior_stability}.

\begin{proof}
Let $\chi_1, \chi_2$ be smooth cut-off functions such that 
\[
0\leq \chi_1 \leq 1, \quad \chi'_1 \leq 0, \quad \chi_1(s)=0, \; s\geq -R , \quad \chi_1(s)=1, \; s\leq -R-1,
\] 
and
\[ 0\leq \chi_2 \leq 1, \quad \chi'_2 \geq 0, \quad \chi_2(s)=0, \; s\leq R , \quad \chi_2(s)=1, \; s\geq R+1.
\]
Now, if $r=1$, notice that $\chi_1 (x+t)$ has support in $x+t \leq -R$, and $\chi_2(x-t)$ has support in $x-t \geq R$. Since $|\hat p| \leq \hat e$, we have from \eqref{VR2_1},
\[
\begin{aligned}
\frac{d}{dt}\int \hat e[\mathbf u](t)\chi_1(x + t)\chi_2(x-t)dx=&\int\hat p |\chi_1'(x+t) |\chi_2(x-t)dx-\int \hat p \chi_1(x+t)\chi'_2(x-t)dx \\
& -\int\hat e |\chi_1'(x+t) |\chi_2(x-t)dx-\int \hat e \chi_1(x+t)\chi'_2(x-t)dx\\
&+\int F_e[\mathbf u](t) \chi_1(x+t)\chi_2(x-t) dx\\
\leq & \int F_e[\mathbf u](t) \chi_1(x+t)\chi_2(x-t) dx =0,
\end{aligned}
\]
proving the stability estimate for the left side. The other side is proved similarly. 
\end{proof}

\section{Interior control}\label{sec:4}

In this section we prove the last estimate in \eqref{Conclusion}. First of all, notice that one can write \eqref{SistemaP_0} as follows 
\begin{equation}
\begin{cases}\label{SistemaP_0_Q}
\Box z= 2\sinh(2B)Q_0(D,D)-2\sinh(2(B+z)) \left(Q_0(D,D)+ 2 Q_0(D,s) +Q_0(s,s) \right),\\
\Box s= -\frac{\sinh(2B)}{\sinh^2(B)}Q_0(D,B)+\frac{\sinh(2(B+z))}{\sinh^2(B+z)} \left(Q_0(D,B)+  Q_0(D,z)  + Q_0(B,s)  +Q_0(z,s) \right),
\end{cases}
\end{equation}
where $Q_0$ is the fundamental null form %, which is defined as the following bilinear form:
	\begin{equation*}%\label{NC}
		Q_0 (\phi_1,\phi_2)= m^{\alpha \beta}\partial_{\alpha} \phi_1 \partial_{\beta} \phi_2,
	\end{equation*}
and where $m_{\alpha \beta}$ to denote the standard Minkowski metric on $\mathbb{R}^{1+1}$ with signature $(-1,1)$. It can also be noticed that the null structure is ``quasi-preserved'' after differentiating with respect to $x$, in the sense that 
	\begin{equation}\label{derivadanullC}
		\partial_xQ_0(\phi,\tilde{\Lambda})=Q_0(\partial_x \phi,\tilde{\Lambda})+Q_0(\phi,\partial_x \tilde{\Lambda}).
	\end{equation}
	Additionally, we have the following relation between  the null form and the Killing vector fields $L$ and $\underline{L}$
	\begin{equation}\label{importante}
		Q_0(\partial_{x}^p \phi, \partial_{x}^q \tilde{\Lambda}) \lesssim \abs{L\partial_x^p \phi}\abs{\underline{L}\partial_x^q \tilde{\Lambda}}+  \abs{\underline{L}\partial_x^p \phi}\abs{L\partial_x^q \tilde{\Lambda}},
	\end{equation}
where the implicit constant is independent of $(\tilde \Lambda, \phi)$. It is easy to check that (see \eqref{varphi}):
\begin{itemize}
	\item[(i)] Since $\varphi(u):=(1+|u|^2)^{1+\eta}$ and $0<\eta <1/3$, using the Bernoulli's inequality, we get that\\ $\varphi^{3/4}(\cdot)\leq (1+|2(\cdot)|)^2$.
	Thus,  since $\theta, \sigma \in C_c^{2}(\mathbb{R})$, one has that for some fixed constant $K_1,K_2>0$,
	\begin{equation}\label{eqn:alpha0}
		 | \theta^{(n+1)}(2 \underline u)| \leq \frac{K_1}{\varphi^{3/4}(\underline u)}, \quad n=0,1,
	\end{equation}
	and 
	\begin{equation}\label{eqn:alpha1}
		|\sigma^{(n+1)}(-2 u)| \leq \frac{K_2}{\varphi^{3/4}(u)}, \quad  n=0,1.
	\end{equation}

	\item[(ii)]  The following relations for the null vector field $L$ and $\underline{L}$ hold: 
	\begin{equation}\label{Llogalpha}
		\begin{aligned}
			&~{} |LD| = 2\varepsilon\left|\frac{g_1}{g_3}\right||\theta'(2\underline u)| \lesssim   \frac{ \varepsilon K_1}{\varphi^{3/4}(\underline u)},    \quad |\underline L D | =2\varepsilon\left|\frac{g_2}{g_3}\right||\sigma'(-2u)|\lesssim  \frac{\varepsilon K_2}{\varphi^{3/4}(u)},  \\
			&~{}  |LD_x|\lesssim \frac{\varepsilon K_1}{\varphi^{3/4}(\underline u)},  \qquad \qquad \qquad \quad  |\underline LD_x|\lesssim \frac{ \varepsilon K_2}{\varphi^{3/4}(u)}, 
			%, \quad \leq  \frac{K_1 \varepsilon }{\varphi^{3/4}(\underline u)},
		\end{aligned}
	\end{equation}
	and similar estimates for $B$.
\end{itemize}
Recall $\Sigma_t$, $C_{u}$ and $\underline{C}_u$ as introduced in \eqref{S_t}, \eqref{C1} and \eqref{C2}, respectively.  Recall the energies introduced in \eqref{energies_0}. We will adapt the proof of Theorem 3.1 in \cite{MT2023} to the case of solitons, to obtain that the following energies remain small: for $k=0,1$:
\begin{equation}\label{energies}
\begin{aligned}
	&	\mathcal{E}_k[z,w](t)=\int_{{\color{black}\Sigma}_t} \left[ \varphi(u)\abs{\underline{L}\partial_x^kz}^2+  \varphi(\underline u)\abs{L\partial_x^k z}^2\right]dx,\\
	&	\overline{\mathcal{E}}_k[s,m](t)=\int_{{\color{black}\Sigma}_t} \left[\varphi(u) \abs{\underline{L}\partial_x^ks}^2+  \varphi(\underline u)\abs{L\partial_x^k s}^2\right]dx,\\
	& \mathcal{F}_k[z,w](t)= \sup_{{\color{black} \underline u}\in \mathbb{R}} \int_{{\color{black} \underline{C}_{\underline u}}} \varphi(u) \left| \underline{L}\partial_x^k z \right|^2 + \sup_{{\color{black} u}\in \mathbb{R}} \int_{{\color{black} C_{u}}}  \varphi(\underline u) \abs{L\partial_x^k z}^2 ,\\
	& \overline{\mathcal{F}}_k[s,m](t)= \sup_{{\color{black}\underline u}\in \mathbb{R}} \int_{C_{{\color{black} \underline u}}} \varphi(u) \left| \underline{L}\partial_x^ks \right|^2 + \sup_{{\color{black} u}\in \mathbb{R}} \int_{C_{{\color{black} u}}}   \varphi(\underline u) \abs{L\partial_x^k s}^2 .
\end{aligned}
\end{equation}
Then, using \eqref{energies} we define the total energy norms as follows:
\[
\mathcal{E}(t) := \mathcal{E}_0[z,w] (t)+\mathcal{E}_1[z,w](t).
\] 
Analogously one defines $\mathcal{F}(t)$, $\overline{\mathcal{E}}(t)$, and $\overline{\mathcal{F}}(t).$  From \eqref{ICs} we know that $\mathcal{E}(0)  +\overline{\mathcal{F}}(0)  <C_1\delta^2$. The stability of the soliton perturbation \eqref{Conclusion} will be consequence of

\begin{thm}\label{thm_aux}
Assume that there exists $T^*>0$ such that, for all $t\in [0,T^{*}]$, the estimates% under which 
	\begin{align}\label{supuesto}
		&	\mathcal{E}(t)+\mathcal{F}(t) \leq {\color{black} 6C_1}C_0\delta^2,\\
		&	\overline{\mathcal{E}}(t)+ \overline{\mathcal{F}}(t) \leq {\color{black} 6C_1}C_0\delta^2\label{supuesto1},
	\end{align}
hold	for some $C_0>0$ and
	\begin{equation}\label{condicionlambda}
		\sup_{t \in [0,T^{*}]}\norm{z}_{L^{\infty}(\mathbb{R})} \leq \dfrac{\lambda}{2}.
	\end{equation}
	Then for all $t\in [0,T^*]$ there exists a universal constant $\delta_0$ (independent of $T^{*}$) such that the previous estimates are improved for all $\delta \leq \delta_0$. 
\end{thm}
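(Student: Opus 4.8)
The plan is to run a continuity (bootstrap) argument on the weighted energies \eqref{energies} via the null-frame energy inequality \eqref{EEnergia}, applied to each scalar $\partial_x^k z$ and $\partial_x^k s$ for $k=0,1$. Applying \eqref{EEnergia} with $\psi=\partial_x^k z$ and source $\rho=\partial_x^k\Box z$ (and analogously for $s$), the left-hand side reproduces exactly $\mathcal E_k[z,w](t)+\mathcal F_k[z,w](t)$ (resp. the barred quantities for $s$), while the first term on the right is controlled by the data through \eqref{ICs}, giving $C_0\,\mathcal E(0)\le C_0C_1\delta^2$. Thus everything reduces to estimating the spacetime integrals
\[
\iint_{D_t}\big[\varphi(u)\,|\underline L\partial_x^k z|+\varphi(\underline u)\,|L\partial_x^k z|\big]\,|\partial_x^k\rho_z|\,d\tau\,dx
\]
and its analogue for $s$, where $\rho_z,\rho_s$ denote the right-hand sides of \eqref{SistemaP_0_Q}.

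The decisive input is the null structure. Writing the sources as in \eqref{SistemaP_0_Q}, the purely solitonic quadratics combine into differences such as $\sinh(2B)-\sinh(2(B+z))=O(z)$ and $\tfrac{\sinh(2B)}{\sinh^2 B}-\tfrac{\sinh(2(B+z))}{\sinh^2(B+z)}=O(z)$, with coefficients uniformly bounded by Corollary \ref{condicionlambda_z}; the rest are null forms $Q_0$. Expanding every $Q_0$ through \eqref{importante} into products $|L\,\cdot|\,|\underline L\,\cdot|$, one meets a first regime of \emph{terms carrying a soliton factor} ($Q_0(D,D)$, $Q_0(D,s)$, $Q_0(D,z)$, $Q_0(D,B)$, $Q_0(B,s)$), where \eqref{Llogalpha} supplies both smallness and decay, $|LD|,|LB|\lesssim\varepsilon\varphi^{-3/4}(\underline u)$ and $|\underline L D|,|\underline L B|\lesssim\varepsilon\varphi^{-3/4}(u)$. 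Here I would foliate $D_t$ by the null cones $\underline C_{\underline u}$ (resp. $C_u$), apply Cauchy--Schwarz \emph{along each cone} to produce factors $\mathcal F^{1/2}\overline{\mathcal F}^{1/2}$, and integrate the residual weight $\int\varphi^{-3/4}(\underline u)\,d\underline u<\infty$, obtaining a bound $\lesssim\varepsilon(\mathcal E+\mathcal F+\overline{\mathcal E}+\overline{\mathcal F})$.

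The genuinely cubic terms $Q_0(s,s)$, $Q_0(z,s)$ (and, after differentiation, $Q_0(\partial_x s,s)$, etc.) carry \emph{no} solitonic smallness, and are the crux. For these I would place the factor carrying the transverse weight in $L^\infty$ through a weighted one-dimensional Sobolev inequality, e.g. $\|\varphi^{1/2}(\underline u)\,Lf\|_{L^\infty_x}^2\lesssim\overline{\mathcal E}[f]\lesssim\delta^2$ (this is precisely where both the $k=0$ and $k=1$ energies enter, and where compact support in $x$, inherited by finite speed of propagation, also yields $\|z\|_{L^\infty}\lesssim\mathcal E^{1/2}\lesssim\delta$, improving \eqref{condicionlambda}). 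The two remaining factors, carrying matching weights, are paired by Cauchy--Schwarz along the cones into $\mathcal F^{1/2}\overline{\mathcal F}^{1/2}$, leaving the residual weight $\int\varphi^{-1/2}(u)\,du<\infty$; this last integrability is exactly where the hypothesis $\eta>0$ in \eqref{varphi} is essential, giving a bound $\lesssim\delta(\mathcal E+\mathcal F+\overline{\mathcal E}+\overline{\mathcal F})$. The differentiated ($k=1$) estimates follow the same scheme: by \eqref{derivadanullC} the operator $\partial_x$ maps a null form into a sum of null forms with one derivative redistributed, so no factor ever carries two derivatives and there is no derivative loss, while the coefficient derivatives $\partial_x\sinh(2(B+z))\sim(B_x+z_x)\cosh(\cdots)$ split into a solitonic piece $B_x$ (first regime) and a perturbative piece $z_x$ (cubic regime).

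Collecting all contributions gives, for every $t\in[0,T^*]$,
\[
\mathcal E(t)+\mathcal F(t)\le C_0C_1\delta^2+C_0\,C(\varepsilon+\delta)\big(\mathcal E+\mathcal F+\overline{\mathcal E}+\overline{\mathcal F}\big)(t),
\]
and symmetrically for the barred energies. Inserting the bootstrap bounds \eqref{supuesto}--\eqref{supuesto1} on the right and choosing $\varepsilon_0,\delta_0$ so small that $C_0C(\varepsilon+\delta)$ falls below a fixed threshold, the right-hand side is bounded by, say, $3C_1C_0\delta^2<6C_1C_0\delta^2$, strictly improving the assumed bounds (and likewise $\|z\|_{L^\infty}<\lambda/2$); a standard continuity argument then propagates the estimates over $[0,T^*]$ and ultimately globally. \textbf{The main obstacle} I anticipate is the cubic, soliton-free null-form interactions: closing them relies only on the $\delta$-smallness coming from the bootstrap, the weight bookkeeping is tight, and the convergence of the residual cone integrals hinges delicately on $\eta>0$, while one must simultaneously verify that the once-differentiated system loses no derivatives.
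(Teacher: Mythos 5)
Your proposal follows essentially the same route as the paper: a bootstrap closed through the null-frame energy inequality \eqref{EEnergia} applied to $\partial_x^k z$ and $\partial_x^k s$, with the soliton-carrying null forms absorbed via the decay \eqref{Llogalpha} and Cauchy--Schwarz along the null foliations, the soliton-free cubic interactions handled by the weighted pointwise bound of Lemma \ref{lema1} (which is exactly your weighted Sobolev inequality $\|\varphi^{1/2}(\underline u)\,Lf\|_{L^\infty}^2\lesssim \overline{\mathcal E}[f]$), and a final smallness choice of $\varepsilon,\delta$ to improve the bootstrap constants. The argument is correct and matches the paper's proof in all essential respects.
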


\subsection{Proof of Theorem \ref{thm_aux}} 
As usual, we work with the system for $z$ in \eqref{SistemaP_0_Q}, the one for $s$ being very similar in both nature and estimates. Deriving \eqref{SistemaP_0_Q} and using  \eqref{derivadanullC}  we obtain:
\begin{equation}\label{derivadas}
	\square \partial_x z  = \rho_1 +\rho_2,
\end{equation}
where
\begin{equation}\label{rho_12}
	\begin{cases*}
		\rho_1:=4\sinh(2B)Q_0(\partial_x D,D)+4\cosh(2B)\partial_xBQ_0(D,D)\\
		\qquad  -4 \sinh(2(B +z)) \left( Q_0(\partial_xD,D) +Q_0(\partial_xD, s) + Q_0(D, \partial_x s ) + Q_0(s, \partial_x s) \right),\\
		\rho_2:= -4 \cosh(2(B+z)) (\partial_xB +\partial_x z) \left(Q_0(D,D)+ 2 Q_0( D, s)+Q_0( s, s)\right).  
		%-2\Big[\sinh(2\lambda +2z)\left(Q_0(\partial_xs,s)+Q_0(s,\partial_xs)\right)+2\partial_xz\cosh(2\lambda+2z)Q_0(s,s)\Big].
	\end{cases*}
\end{equation}
Under the assumptions (\refeq{supuesto})-(\refeq{supuesto1})-(\refeq{condicionlambda}) for all $t\in [0,T^{*}]$, we assume that the solution remains regular, to later show that these bounds are maintained, with a better constant.  \\
Using \eqref{EEnergia}, with $\psi=\partial^k_x z$ and  \eqref{derivadas}-\eqref{rho_12}. Taking the sum over $k=0,1$, and using \eqref{condicionlambda}, we obtain
\begin{equation}\label{EE2}
	\begin{aligned}
		&\mathcal{E}(t)+\mathcal{F}(t) \leq 2C_0 \mathcal{E}(0)\\
		& +2 C_{\lambda}C_0\iint_{D_t} \left(  \varphi (u) | \underline{L}z|+ \varphi(\underline u)|L z|\right)  \left| Q_0(D,D)\right| \\
		&     + C_{\lambda} C_0  \iint_{D_t} \left(  \varphi (u) | \underline{L}z|+ \varphi(\underline u)|L z|\right)  \left| Q_0(D,s)\right|    + C_{\lambda}C_0  \iint_{D_t} \left( \varphi (u) | \underline{L}z|+ \varphi(\underline u) |L z|\right)   |Q_0(s,s)| \\
		&         +2C_{\lambda}C_0\iint_{D_t} \left( \varphi (u) | \underline{L}\partial_xz|+ \varphi(\underline u)  | L\partial_x z|\right) |\rho_1|  +2C_{\lambda}C_0 \iint_{D_t} \left( \varphi (u) | \underline{L}\partial_xz|+\varphi(\underline u)  | L\partial_x z|\right) |\rho_2|,	\end{aligned}
\end{equation}
where $C_{\lambda}:= C(c_0(\lambda),c_1(\lambda))$, same as in  \eqref{condicionlambda_z}. Recall the following result (\cite{Luli2018}, Lemma 3.2):% for the study of the global problem in the Section \refeq{Sect:3} :
\begin{lem}\label{lema1}
	Under assumptions  \eqref{supuesto} and \eqref{supuesto1}, there exists $C_2>0$ such that:
	\begin{align*}
		&	|Lz(t,x)|\leq \dfrac{C_2 \delta}{(1+|\underline u |^2)^{1/2+\eta/2}}, &  	|Ls(t,x)|\leq \dfrac{C_2 \delta}{(1+|\underline u |^2)^{1/2+\eta/2}},\\
		& 	|\underline{L}z(t,x)|\leq \dfrac{C_2 \delta}{(1+|u |^2)^{1/2+\eta/2}}, &  	|\underline{L}s(t,x)|\leq \dfrac{C_2 \delta}{(1+|u |^2)^{1/2+\eta/2}}.
	\end{align*} 	
\end{lem}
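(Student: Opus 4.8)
The plan is to obtain these pointwise decay estimates directly from the weighted energy control in the hypotheses \eqref{supuesto}--\eqref{supuesto1}, by means of a weighted one-dimensional Sobolev embedding performed on each constant-time slice $\Sigma_t$; this is precisely the mechanism behind \cite[Lemma 3.2]{Luli2018}, which I would adapt to the energies \eqref{energies}. I would prove the bound for $Lz$ in full, the remaining three following by the obvious symmetry: interchanging the roles of $u$ and $\underline{u}$ produces the estimate for $\underline{L}z$, and replacing $(z,\mathcal E)$ by $(s,\overline{\mathcal E})$ produces those for $Ls$ and $\underline{L}s$. First I would fix $t\ge 0$ and treat all quantities as functions of $x$ on $\Sigma_t$, where $u=(t-x)/2$ and $\underline{u}=(t+x)/2$. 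Because $L=\partial_t+\partial_x$ and $\partial_x$ are constant-coefficient vector fields they commute, so $L\partial_x z=\partial_x(Lz)$; hence the definition \eqref{energies} together with the assumption \eqref{supuesto} yields the two $L^2$ bounds
\[
\int_{\Sigma_t}\varphi(\underline{u})\abs{Lz}^2\,dx\le \mathcal E_0(t)\lesssim\delta^2,\qquad \int_{\Sigma_t}\varphi(\underline{u})\abs{\partial_x(Lz)}^2\,dx\le \mathcal E_1(t)\lesssim\delta^2,
\]
crucially with the \emph{same} weight $\varphi(\underline{u})$ in both.

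With $g:=Lz$, the heart of the argument is the identity, valid for any $x_0$ with $\underline{u}_0:=(t+x_0)/2$,
\[
\varphi(\underline{u}_0)\,g(x_0)^2=\int_{-\infty}^{x_0}\partial_x\!\left(\varphi(\underline{u})g^2\right)dx=\int_{-\infty}^{x_0}\left(\tfrac12\varphi'(\underline{u})\,g^2+2\varphi(\underline{u})\,g\,\partial_x g\right)dx,
\]
where I used $\partial_x\varphi(\underline{u})=\tfrac12\varphi'(\underline{u})$; the boundary contribution at $-\infty$ vanishes along a suitable sequence because $\varphi(\underline{u})g^2\in L^1(\Sigma_t)$ by the first bound above. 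I would then control the two terms separately. From \eqref{varphi} one has $|\varphi'(s)|\lesssim(1+|s|^2)^{1/2+\eta}\le\varphi(s)$ for $0<\eta<1/3$, so the first integrand is dominated by $\varphi(\underline{u})g^2$ and contributes $\lesssim\delta^2$; for the second, Cauchy--Schwarz against the common weight gives $2\big(\int\varphi(\underline{u})g^2\big)^{1/2}\big(\int\varphi(\underline{u})(\partial_x g)^2\big)^{1/2}\lesssim\delta^2$. Putting these together yields $\varphi(\underline{u}_0)\abs{Lz(t,x_0)}^2\lesssim\delta^2$, i.e. $\abs{Lz}\le C_2\delta\,\varphi^{-1/2}(\underline{u})=C_2\delta\,(1+|\underline{u}|^2)^{-(1/2+\eta/2)}$, which is exactly the claimed bound.

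For $\underline{L}z$ I would repeat the computation carrying the weight $\varphi(u)$ instead (also bounded by $\mathcal E_0$ and $\mathcal E_1$), the only change being the harmless sign in $\partial_x u=-\tfrac12$, which gives decay in $|u|$; the estimates for $Ls,\underline{L}s$ are verbatim with $\overline{\mathcal E}_0,\overline{\mathcal E}_1$ in place of $\mathcal E_0,\mathcal E_1$. I do not anticipate a genuine difficulty, as the statement is essentially a transcription of the cited embedding; the only steps requiring care are bookkeeping ones, namely confirming that the $k=0$ and $k=1$ pieces of each energy carry the \emph{same} weight so that Cauchy--Schwarz closes, verifying $|\varphi'|\lesssim\varphi$ throughout the admissible range of $\eta$, and checking that the exponent $\tfrac12+\tfrac\eta2$ produced by $\varphi^{-1/2}$ coincides with the one in the statement.
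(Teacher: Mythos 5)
Your argument is correct: the paper does not prove this lemma but simply cites \cite{Luli2018}, Lemma 3.2, and your weighted fundamental-theorem-of-calculus/Sobolev argument on $\Sigma_t$ (using that $\mathcal E_0$ and $\mathcal E_1$ carry the same weight, $|\varphi'|\lesssim\varphi$, and Cauchy--Schwarz) is precisely the standard proof of that cited result, with the exponent $\varphi^{-1/2}=(1+|\cdot|^2)^{-(1/2+\eta/2)}$ matching the statement. No gaps.
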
 
Note that  we can conveniently write $\partial_x z= \frac{1}{2}\left(Lz -\underline L z \right)$ and $\partial_xB = \frac{1}{2}\left( L B-\underline L B \right)$. Then, using \eqref{importante} we can estimate \eqref{EE2} as follows:
\begin{equation}\label{EE2b}
	\begin{aligned}
		&\mathcal{E}(t)+\mathcal{F}(t) \lesssim  2C_0 \mathcal{E}(0)+2C_0\sum_{n=1}^{6}\mathcal{T}_n.
\end{aligned}
\end{equation}	
where the integrals $\mathcal{T}_n$ are given by 
\[
\begin{aligned}	
\mathcal{T}_1:= &~{} \iint_{D_t} \left( \varphi (u) | \underline{L}z|+ \varphi(\underline u) |L z|\right)( |Ls||\underline L s| + (|Lz|+|\underline L z|)(|Ls||\underline L s|))\\
& \qquad+\iint_{D_t} \left( \varphi (u) | \underline{L}\partial_xz|+ \varphi(\underline u)  | L\partial_x z|\right)	(|\underline L \partial_x s ||Ls|+|\underline L s ||L\partial_x s|),
\end{aligned}
\]
\[
\mathcal{T}_2:=  \iint_{D_t} \left(  \varphi (u) | \underline{L}z|
		+ \varphi(\underline u)|L z|\right)|LD||\underline L D|+ \iint_{D_t} \left( \varphi (u) | \underline{L}\partial_xz|+ \varphi(\underline u)  | L\partial_x z|\right) (|L\partial_x D||\underline L D|+|\underline L \partial_x D||L D|),
\]
\begin{equation}\label{Tn}
\begin{aligned}	
		&\mathcal{T}_3:=  \iint_{D_t} \left(  \varphi (u) | \underline{L}z|+ \varphi(\underline u)|L z|\right)\left( \left| L s\right||\underline LD| +\left| \underline L s\right||LD|\right)\\	
			& \qquad  + \iint_{D_t} \left( \varphi (u) | \underline{L}\partial_xz|+ \varphi(\underline u)  | L\partial_x z|\right)	\left( |\underline L \partial_x s ||LD|+ | L\partial_x z|+|L\partial_x D||\underline L s|+ |\underline L \partial_x D|| L s|+| L \partial_x s ||\underline LD|\right),\\
			& \mathcal{T}_4:= \iint_{D_t} \left(  \varphi (u) | \underline{L}z|+ \varphi(\underline u)|L z|\right)|LB-\underline L B||LD||\underline L D|+ \iint_{D_t} \left( \varphi (u) | \underline{L}\partial_xz|+ \varphi(\underline u)  | L\partial_x z|\right)|LB||\underline L D|||LD|,\\		
%\end{aligned}
%\end{equation*}
%\begin{equation}
%\begin{aligned}				
& \mathcal{T}_5:= \iint_{D_t} \left( \varphi (u) | \underline{L}\partial_xz|+ \varphi(\underline u)  | L\partial_x z|\right)|Lz||\underline L D|||LD|+ \iint_{D_t} \left( \varphi (u) | \underline{L}z|+ \varphi(\underline u) |L z|\right)|Ls||LB||\underline L D|\\
& \qquad + \iint_{D_t} \left( \varphi (u) | \underline{L} z|+ \varphi(\underline u)  | Lz| \right)\left(|\underline L s||L B||\underline L D|+|\underline L s||\underline L B||LD|+|\underline L z||L D||\underline L D|+|Ls||\underline L B||\underline L D|\right),\\		
& \mathcal{T}_{6}:=\iint_{D_t} \left( \varphi (u) | \underline{L} z|+ \varphi(\underline u)  | Lz| \right)\left(|L z||\underline Ls||L D|+|L s||\underline Ls||L B|+|Lz||L s||\underline L D|+|\underline L z ||\underline L s| |L D|\right)\\
&\qquad + \iint_{D_t} \left( \varphi (u) | \underline{L} z|+ \varphi(\underline u)\right) \left(|\underline L z||L s||\underline L D|+ |Ls||\underline L s||\underline L B| \right).
\end{aligned}		
 \end{equation}
Essentially, we have 28 terms to control, but many of them are equivalent. Indeed, it will be sufficient to bound the terms corresponding to $\varphi (u)\underline L z$ and $\varphi(u)\underline L \partial_x z$, since by symmetry, the procedure for the other terms will be analogous.
 
 First, note that the integral $\mathcal{T}_1$ can be written as  $\mathcal{T}_1:= \sum_{j=1}^{5}\mathcal{T}_{1,j} $,  
 taking into account the Lemma \ref{lema1} and the estimates already established in \cite{JT,MT2023}, we have that $\mathcal{T}_{1,j}\lesssim \delta^3$ for $j\in \{1,4,5\}$, while $\mathcal{T}_{1,j}\lesssim \delta^4$ for $\{2,3\}.$ See these references for full details. 
 
 As follows, we will focus on the key terms in this framework, that is, the terms $\mathcal{T}_i$,  for $i\in \left\{2,3,...,6\right\}. $  From now on we define $K:= \max \{K_1,K_2\}$, where the constant $K_1,K_2$ are given by the estimate \eqref{Llogalpha}.
  Let us start with $\mathcal{T}_2$, taking $m,k\in \{0,1\}$ we have that all the integrals that define this term can be written as:
 \begin{equation}
  \iint_{D_t}\left(  \varphi (u) | \underline{L}\partial_x^kz|
		+ \varphi(\underline u)|L\partial_x^k z|\right)|L\partial_x^kD||\underline L\partial_x^m D|.
 \end{equation}
 We bound this term in the following form: using the estimates \eqref{Llogalpha}, \eqref{supuesto} and  H\"{o}lder inequality we get
% \begin{equation*}
 \begin{align*}
\iint_{D_t} & \left(  \varphi (u) | \underline{L}\partial_x^kz|
		+  \varphi(\underline u)|L\partial_x^k z|\right) |L\partial_x^kD||\underline L\partial_x^m D|\\
		& \lesssim \left( \iint_{D_t}\frac{ \varphi(u)|\underline L \partial_x^k z|^2}{\varphi(\underline u)}  \right)^{1/2}\left( \iint_{D_t} \frac{\varepsilon^4K_1^2K_2^2}{\varphi^{1/2}(u)\varphi^{1/2}(\underline u)}\right)^{1/2}\\
		&\qquad +\left( \int_{\mathbb{R}}\frac{1}{\varphi(\underline u)}\left[ \int_{C_{\underline u}} \varphi( u)|L z|^2ds\right]d\underline u\right)^{1/2} \left( \iint_{D_t} \frac{\varepsilon^4K_1^2K_2^2}{\varphi^{1/2}(u)\varphi^{1/2}(\underline u)}\right)^{1/2}     \lesssim \varepsilon^2K_1K_2\delta.
  \end{align*}
% \end{equation*}
Thus $\mathcal{T}_2\lesssim  \varepsilon^2K^2\delta $.  The next term to  study is $\mathcal{T}_3$, in this case, taking  $m,k\in \{0,1\}$ and $\psi\in \{ D,s\},$ we get that all the integrals that define this term can be written as:
 \begin{equation}
  \iint_{D_t}\left(  \varphi (u) | \underline{L}\partial_x^kz|
		+ \varphi(\underline u)|L\partial_x^k z|\right)|L\partial_x^k\psi||\underline L\partial_x^m \psi|,
 \end{equation}
 using again \eqref{Llogalpha}, Lemma \ref{lema1} and  H\"{o}lder inequality we get that $\mathcal{T}_3\lesssim \varepsilon K \delta^2.$
 Now, for the term $\mathcal{T}_4$, if we take $\psi_1,\psi_2\in \{D,B\},$ all the integrals that define this term can be written as:
 \begin{equation}\label{T4}
  \iint_{D_t}\left(  \varphi (u) | \underline{L}\partial_x^kz|
		+ \varphi(\underline u)|L\partial_x^k z|\right)\left(|L\psi_1||\underline L\psi_2|| L \psi_2|+ |L\psi_1||\underline L\psi_2|| \underline L \psi_2|\right).
 \end{equation}
 Now, we can bound this term in the following form: using the estimates \eqref{Llogalpha}, \eqref{supuesto} and H\"{o}lder inequality  again, we get
 \begin{equation*}
 \begin{aligned}
  \iint_{D_t} \varphi (u) | \underline{L}\partial_x^kz|\left(|L\psi_1||\underline L\psi_2|| L \psi_2|\right)&\lesssim  \left(\iint_{D_t} \varphi(u)|\underline L \partial_x^kz|^2||L \psi_1 |^2 \right)^{1/2}\left(\iint_{D_t} \varphi(u)||L \psi_2 |^2|\underline L \psi_2 |^2 \right)^{1/2}\\
		& \lesssim\left(\iint_{D_t} \frac{\varepsilon^2K_1^2\varphi(u)|\underline L \partial_x^kz|^2|}{\varphi^{3/2}(\underline u)} \right)^{1/2}\left(\iint_{D_t}\frac{\varepsilon^4K_1^2K_2^2}{\varphi^{3/2}(\underline u)\varphi(u)} \right)^{1/2} \lesssim \varepsilon^3K^3\delta,
 \end{aligned}
 \end{equation*}
 and the estimate for the remaining terms in \eqref{T4} can be obtained in an analogous way, thus $\mathcal{T}_4\lesssim \varepsilon^3K^3\delta.$ The following term to study is $\mathcal{T}_5$, for this case,  we take $\psi\in \{s,z\}$  and again  $\psi_1,\psi_2\in \{ D,B\}.$   Then the terms that describe the integrals can be written, for $i,j=1,2,$ as follows:
\begin{equation}\label{T5}
  \iint_{D_t}\left(  \varphi (u) | \underline{L}\partial_x^kz|
		+ \varphi(\underline u)|L\partial_x^k z|\right)\left(|\underline L\psi||\underline L\psi_i|| L \psi_j|+| L\psi||\underline L\psi_i|| L \psi_j|\right),
 \end{equation}
 using \eqref{supuesto}, \eqref{Llogalpha} and  H\"{o}lder inequality  again, we get
 \begin{equation*}
 \begin{aligned}
   \iint_{D_t}  \varphi (u) | \underline{L}\partial_x^kz| \left(| L\psi||\underline L\psi_i|| L \psi_j|\right)\leq & \left(\iint_{D_t} \varphi(u) |\underline L \partial_x^k z|^2 |L\psi_j|^2 \right)^{1/2} \left(\iint_{D_t} \varphi(u) |\underline L \psi|^2 |\underline L\psi_i|^2 \right)^{1/2}\\
 \leq &  \left(\iint_{D_t} \frac{K_2^2 \varepsilon^2}{\varphi^{3/2}(\underline u)} \varphi(u) |\underline L \partial_x^k z|^2 \right)^{1/2} \left(\iint_{D_t} \frac{K_1^2 \varepsilon^2 }{\varphi^{3/2}(\underline u)} \varphi(u)|\underline L \psi|^2 \right)^{1/2} \\
  \lesssim &  K_1K_2\varepsilon^2 \delta^2.
 \end{aligned}
 \end{equation*}
For the last term $\mathcal{T}_6$, if take $\psi_1,\psi_2\in \{z,s\}$ and $\psi\in \{D,B\},$ we get that the integral terms, for $i,j=1,2$, can be rewritten as 
\begin{equation*}
\begin{aligned}
 \iint_{D_t}\left(  \varphi (u) | \underline{L}z|+ \varphi(\underline u)|L z|\right)(|L\psi_i||\underline L\psi_j||L\psi|+|L\psi_i|| L\psi_j||\underline L\psi|+ |L\psi_i||\underline L\psi_j||\underline L\psi|),
\end{aligned}
\end{equation*}
using Lemma \ref{lema1}, the estimates \eqref{supuesto} and \eqref{Llogalpha}, and the H\"{o}lder inequality, we get 
\begin{equation*}
\begin{aligned}
\iint_{D_t}\varphi (u) | \underline{L}z||L\psi_i||\underline L\psi_j||L\psi|&\lesssim \left(\iint_{D_t}\varphi(u) | \underline{L}z|^2|L\psi_i|^2 \right)^{1/2} \left(\iint_{D_t} \varphi(u) |\underline L\psi_j|^2|L\psi|^2 \right)^{1/2}\\
&\lesssim \left(\iint_{D_t}\frac{\delta^2 \varphi(u)|\underline L z|^2}{\varphi(\underline u)}\right)^{1/2}  \left(\iint_{D_t}\frac{\varepsilon^2 K_2^2\varphi(u) |\underline L\psi_j|^2}{\varphi^{3/2}(\underline u)} \right)^{1/2} \lesssim K\delta^3.
\end{aligned}
\end{equation*}
Finally, from the energy estimate, we can arrange all the previous estimates together, and for universal constants $C_j$ with $j\in \{ 3,4,...,9\}$, with $C_7,C_8,C_9\neq0,$ we have that for all $t\in [0,T^{*}]$:
\begin{equation*}
\mathcal E (t)+\mathcal F(t) \leq 2C_1C_0\delta^2 +C_3\varepsilon^2K^2\delta+C_4\varepsilon^3K^3\delta+C_5K^2\varepsilon^2\delta^2+C_6\varepsilon K \delta^2 +C_7 \delta^3+C_8K\delta^3+C_9\delta^4. 
\end{equation*}
Now, we take $\delta_0$ such that
\[ \delta_0 \leq \min\left\{\frac{C_0C_1}{2C_8K},\frac{C_1C_0}{2C_7} \right\}\quad \mbox{and} \quad  \delta_0^2 \leq \frac{C_1C_0}{2C_9}.
\] 
Then, we can see that for all $0< \delta < \delta_0$     and for all $t\in [0,T^*]$, we can take $0<\varepsilon <\varepsilon_0 $ such that $\varepsilon < \delta$ and 
\[ \max \left\{\varepsilon(C_3 K^2+C_6 K), \varepsilon^2( C_4K^3 +C_5K^2 )\right\} \leq \frac{C_1C_0}{2}. 
\]
 Therefore, we obtain
\begin{equation*}
\mathcal E (t)+\mathcal F(t) \leq  \frac{11}{2}C_1C_0 \delta^2,
\end{equation*}
improving the constant in \eqref{supuesto} and this finishes the proof of Theorem \ref{thm_aux}.
{\small
\bibliographystyle{unsrt}
	%\bibliography{bibfile}

}

\end{document}